\newtheorem{theorem}{Theorem}[section]
\newtheorem{lemma}[theorem]{Lemma}
\newtheorem{proposition}[theorem]{Proposition}
\newtheorem{corollary}[theorem]{Corollary}
\theoremstyle{remark}
\newtheorem{remark}[theorem]{Remark}
\newtheorem{example}[theorem]{Example}
\newtheorem{definition}[theorem]{Definition}
\newtheorem{question}{Question}
\numberwithin{equation}{section}
\newcommand{\hiota}{\widehat{\iota}}
\newcommand{\x}{\widehat{X}}
\newcommand{\tX}{\widetilde{X}}
\newcommand{\hsC}{\widehat{\mathscr{C}}}
\newcommand{\hsL}{\widehat{\mathscr{L}}}
\newcommand{\hU}{\widehat{U}}
\newcommand{\fm}{\mathfrak{m}}
\newcommand{\bZ}{\mathbb{Z}}
\newcommand{\bQ}{\mathbb{Q}}
\newcommand{\bC}{\mathbb{C}}
\newcommand{\bP}{\mathbb{P}}
\newcommand{\chX}{\widehat{\mathcal{X}}}
\newcommand{\cX}{\mathcal{X}}
\newcommand{\cV}{\mathcal{V}}
\newcommand{\sC}{\mathscr{C}}
\newcommand{\sO}{\mathscr{O}}
\newcommand{\sL}{\mathscr{L}}
\newcommand{\sE}{\mathscr{E}}
\newcommand{\sF}{\mathscr{F}}
\newcommand{\Exc}{\mathrm{Exc}}
\newcommand{\Sing}{\mathrm{Sing}}
\newcommand{\sing}{\mathrm{Sing}}
\newcommand{\Def}{\mathrm{Def}}
\newcommand{\Spec}{\mathrm{Spec}\,}
\newcommand{\Bl}{\mathrm{Bl}}
\newcommand{\rk}{\mathrm{rank}}
\title[The connectedness of the standard web of Calabi--Yau 3-folds]{On the connectedness of the standard web of Calabi-Yau 3-folds and small transitions}
\author[S.-S.\ Wang]{Sz-Sheng Wang}
\address{S.-S.\ Wang: Department of Mathematics, National Taiwan University, Taipei, Taiwan}
\email{d98221004@ntu.edu.tw}
\thanks{Supported by MOST project 103-2115-M-002-002-MY3.}
\begin{document}
\maketitle

\begin{abstract}
We supply a detailed proof of the result by P.S. Green and T. H$\ddot{\text{u}}$bsch that all complete intersection Calabi--Yau 3-folds in product of projective spaces are connected through projective conifold transitions (known as the standard web). We also introduce a subclass of small transitions which we call \emph{primitive} small transitions and study such subclass. More precisely, given a small projective resolution $\pi : \widehat{X} \rightarrow X$ of a Calabi--Yau 3-fold $X$, we show that if the natural closed immersion $\Def(\widehat{X}) \hookrightarrow \Def(X)$ is an isomorphism then $X$ has only ODPs as singularities.
\end{abstract}

\section{Introduction} \label{introsec}

Calabi--Yau conifolds, i.e., Calabi--Yau 3-folds with only ordinary double points (ODPs), arise naturally in algebraic geometry and string theory, where a Calabi--Yau 3-fold $X$ is a projective Gorenstein $3$-fold with at worst terminal singularities such that $K_X \sim 0$ and $H^1(\mathscr{O}_X) = 0$. For example, every Calabi--Yau 3-fold can be deformed to a smooth one or to a conifold \cite{gros, nast}. M.\ Reid \cite{reid3} had proposed to study the moduli spaces of simply connected smooth Calabi--Yau 3-folds through conifold transitions, by which we mean there is a small projective contraction from a smooth Calabi--Yau to a Calabi--Yau conifold so that the conifold is smoothable. This is usually referred as the \emph{Reid's fantasy}. While non-projective conifold transitions are also considered in the literature, in this paper we stick on the projective ones.

In 1988, P.S. Green and T. H$\ddot{\text{u}}$bsch \cite{gh2} discovered the remarkable connectedness phenomenon: \emph{The moduli spaces of complete intersection Calabi--Yau 3-folds (CICYs) in product of projective spaces are connected with each other by a sequence of conifold transitions}. In \cite[\S3 p.435]{gh2}, the authors deferred the proof of the existence of conifold transitions to a forthcoming paper, which unfortunately has not yet been available. This result had since then been used again and again in the literatures on Calabi--Yau geometry and string theory. While there is no doubt on its significance and correctness, a detailed complete proof to it is still long awaited. The first goal of this paper is to supply such a rigorous proof:

\begin{theorem} [$=$ Theorem \ref{webcicy}] \label{webcicy-intro}
Any two (parameter spaces of) complete intersection Calabi--Yau 3-folds in product of projective spaces are connected by a finite sequence of conifold transitions.
\end{theorem}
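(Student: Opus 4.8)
The plan is to encode each CICY by its \emph{configuration matrix}, whose rows record the ambient product $\bP^{n_1}\times\cdots\times\bP^{n_r}$ and whose columns record the multidegrees of the $k$ defining equations, subject to the Calabi--Yau constraints $\sum_j d_i^j = n_i+1$ for every $i$ and $\sum_i n_i - k = 3$. On the set of such matrices I would introduce the elementary \emph{splitting/contraction} move: a column of multidegree vector $\mathbf{c}$ is written as a sum $\mathbf{c}=\mathbf{c}'+\mathbf{c}''$ of two effective classes and, after adjoining an auxiliary $\bP^1$ with coordinates $[y_0:y_1]$, is replaced by two columns, of multidegrees $(\mathbf{c}',1)$ and $(\mathbf{c}'',1)$, written $y_0A'+y_1A''=0$ and $y_0B'+y_1B''=0$ with $A',A''$ of multidegree $\mathbf{c}'$ and $B',B''$ of multidegree $\mathbf{c}''$; conversely, eliminating $[y_0:y_1]$ forces the matrix $\left(\begin{smallmatrix}A'&A''\\ B'&B''\end{smallmatrix}\right)$ to be singular and returns the single determinantal equation $A'B''-A''B'=0$ of multidegree $\mathbf{c}$. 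The theorem then splits into two independent assertions: (i) every such move is realized by a genuine conifold transition, and (ii) the graph whose vertices are configuration matrices and whose edges are these moves is connected.

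For (i) I would run the standard determinantal argument. Projecting away the auxiliary $\bP^1$ exhibits the split CICY $\x$ as the incidence correspondence over the determinantal Calabi--Yau $X_0=\{A'B''-A''B'=0\}$ in the contracted ambient: the projection $\x\to X_0$ is an isomorphism over the rank-one locus and contracts a fibre $\bP^1$ over each point of the rank-zero locus $Z=\{A'=A''=B'=B''=0\}$. A dimension count ($Z$ is cut out by the remaining $k'-1$ equations together with the four conditions inside an ambient of dimension $3+k'$, giving expected dimension $0$) shows that for generic coefficients $Z$ is finite, so $\x\to X_0$ is a small contraction whose exceptional set is a disjoint union of $\bP^1$'s and $X_0$ acquires only ODPs, while a Bertini argument in the product ambient gives smoothness of a generic $\x$. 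Since a generic member $X$ of the contracted configuration is a general polynomial of multidegree $\mathbf{c}$ and therefore a smoothing of the special determinantal $X_0$ within the same family, the conifold transition $\x\rightsquigarrow X$ is complete. I would package this as a single lemma and reuse it verbatim at every edge.

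For (ii) I would follow the Green--Hübsch reduction: combining contractions with auxiliary splittings (including \emph{ineffective} ones that leave the underlying Calabi--Yau unchanged and merely re-present it) to bring a factor $\bP^m$ into a form where it meets exactly $m+1$ equations linearly and occurs in no other equation, after which it is eliminated by the determinantal contraction $\sum_j A_{ij}y_j=0\leadsto\det(A_{ij})=0$. Iterating, while controlling a suitable measure of complexity so that the procedure terminates, one connects every configuration to a complete intersection in a single projective space, i.e. to one of $\bP^4[5]$, $\bP^5[2,4]$, $\bP^5[3,3]$, $\bP^6[2,2,3]$, $\bP^7[2,2,2,2]$; one then links these finitely many \emph{cyclic} configurations to a common one by explicit transitions. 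Combined with (i), this yields a finite chain of conifold transitions between any two CICYs.

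The main obstacle is the transversality bookkeeping in (i), carried out uniformly along the reduction of (ii): at each contraction one must certify that the special determinantal member $X_0$ is a genuine \emph{conifold}, i.e. that its rank-zero locus is a finite, reduced set of ODPs rather than a positive-dimensional or worse singular locus, and that the resulting node admits a projective smoothing landing in the target configuration. This forces one to check base-point-freeness and the correct codimension of the degeneracy loci for the relevant multidegree linear systems, and to separate the transition-inducing (\emph{effective}) splittings from the ineffective ones used only to re-present a configuration. Guaranteeing that every step stays inside the class of Calabi--Yau conifolds, and that the complexity measure genuinely decreases so the reduction halts, is where the bulk of the careful work lies.
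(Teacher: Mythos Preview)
Your overall strategy coincides with the paper's: encode CICYs by configuration matrices, connect them via the splitting/contraction moves, and realise each move by a determinantal contraction followed by a smoothing. The combinatorial reduction (ii) differs only in its endpoint---you aim for the five single-$\bP^N$ CICYs, while the paper reduces everything to the configuration $\mathscr{C}_{1111}$ in $(\bP^1)^4$---and either works.

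The substantive gap is in (i), precisely at the point you yourself flag as ``the main obstacle''. Your ODP argument is written only for the $\bP^1$ move, where the local model $A'B''-A''B'$ is visibly a node once the four sections cut transversally; but your reduction (ii) invokes the general $\bP^m$ contraction $\det(A_{ij})=0$, whose singular locus is the corank-$2$ stratum of an $(m+1)\times(m+1)$ matrix, and for this you offer nothing beyond ``check codimension of the degeneracy loci''. Codimension control gives isolated singularities with $\bP^1$ fibres (this is the paper's Theorem~\ref{detcon}, via the Bertini-type theorem for degeneracy loci), but not ODPs; that step is exactly what Green--H\"ubsch deferred and what the paper exists to supply. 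The paper's method is \emph{not} the direct local/transversality analysis you propose: instead it proves an Euler-number criterion (Proposition~\ref{smalltraodp}: $e(\x)-e(\tX)=2\lvert\Sing(X)\rvert$ if and only if all singularities are ODPs), then computes both sides independently as the same Chern number $\int_P\bigl(c_2(\sE)^2-c_1(\sE)c_3(\sE)\bigr)c_{m-n-1}(\sF)$ (Corollaries~\ref{eulchern} and~\ref{cicynumsing}, the latter via the Thom--Porteous formula for the corank-$2$ locus). This settles the ODP claim uniformly for every $\bP^n$ move with no local case analysis. Your direct route could in principle be completed---one would show that a generic $4$-dimensional slice through a corank-$2$ point of the generic determinantal hypersurface has nondegenerate Hessian---but you have not supplied it, and the paper's indirect argument is both what is actually done and arguably cleaner.
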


In order to connect these parameter spaces of CICY 3-folds, the major idea is to use the determinantal contractions introduced in \cite{cdls}.

Let us recall a standard example to explain the process. Consider the smooth CICY 3-fold $\x$ in $\bP^1 \times \bP^4$ defined by $p^0_j (z) t_0 + p^1_j (z) t_1 = 0$ for $j = 1, 2$, where $t_0, t_1$ are homogeneous coordinates on $\Bbb P^1$, $p^0_1(z), p^1_1(z)$ are two general quartic polynomials and $p^0_2(z), p^1_2(z)$ are two linear polynomials on $\bP^4$. Since $t_i$'s can not both vanish, it must be the case that the determinant
$$
\Delta(z) \coloneqq \det (p^i_j(z))
$$
resulting from the projection along $\bP^1$ vanishes (cf. \S \ref{secde}). If we take $p^i_2(z) = z_i$  for $i =0, 1$ and suitable quartic polynomials $p^0_1(z), p^1_1(z)$, then the quintic $X$ defined by $\Delta(z)$ has 16 ODPs, where $p^i_j(z)$'s vanish simultaneously, along a projective plane in $\bP^4$. Let $\widetilde{X}$ be a smooth quintic in $\bP^4$. Note that all quintic hypersurfaces in $\bP^4$ are deformation equivalent inside a flat family (cf.~Proposition \ref{param}). Thus we get a conifold transition $\x \rightarrow X \rightsquigarrow \widetilde{X}$ which connects parameter spaces of $\x$ and $X$.

In general, the task is to verify that the determinantal contraction is a small resolution of a Calabi--Yau conifold.
The tool used in this paper is the following well known criterion (involved topological constraints) for ODPs.

\begin{proposition} [$=$ Proposition \ref{smalltraodp}] \label{smalltraodp-intro}
Let $\x \rightarrow X$ be a small resolution of a Gorenstein terminal 3-fold $X$ and $\tX$ a smoothing of $X$. Then the difference of the topological Euler numbers $e(\x) - e(\tX)$ equals the number $2 \left| \Sing(X) \right|$ if and only if the singularities of $X$ are ODPs.
\end{proposition}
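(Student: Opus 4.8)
The plan is to convert the statement into a comparison of purely local contributions at the singular points. Since $X$ is Gorenstein and terminal, its singularities are isolated; write $\Sing(X) = \{p_1, \dots, p_k\}$. Around each $p_i$ choose a contractible neighborhood $B_i \subset X$, the cone over its link $L_i = \partial B_i$, and set $X^\circ = X \smallsetminus \bigcup_i \mathrm{int}(B_i)$. Both $\x$ and $\tX$ are glued along the $L_i$ from the common exterior $X^\circ$: on the resolution side one glues in $\pi^{-1}(B_i)$, while a smoothing glues in the Milnor fibre $F_i$, which has the same boundary link $L_i$. Additivity of the topological Euler number under such gluings makes the exterior and the link terms cancel in the difference, and since $\pi^{-1}(B_i)$ deformation retracts onto the exceptional fibre $\pi^{-1}(p_i)$, I obtain
\[
e(\x) - e(\tX) = \sum_{i=1}^{k} \bigl( e(\pi^{-1}(p_i)) - e(F_i) \bigr).
\]

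Next I would evaluate the two local terms. By Reid's classification a Gorenstein terminal $3$-fold singularity is an isolated compound Du Val, hence an isolated hypersurface, singularity, so the Milnor fibration applies: $F_i$ is homotopy equivalent to a bouquet of $\mu_i$ three-spheres, where $\mu_i \geq 1$ is the Milnor number, giving $e(F_i) = 1 - \mu_i$. The key algebraic input is that $\mu_i = 1$ holds \emph{exactly} when $p_i$ is an ODP, since Milnor number one characterizes the node $A_1$ among isolated hypersurface singularities. On the resolution side, $\pi$ is small and (as $X$ is Gorenstein) crepant, so $K_{\x} = \pi^* K_X$ and each contracted curve satisfies $K_{\x} \cdot C = 0$; each exceptional fibre $\pi^{-1}(p_i)$ is then a connected one-dimensional scheme whose reduced components are smooth rational curves meeting in a tree. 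Hence $e(\pi^{-1}(p_i))$ equals one plus the number of components, and in particular $e(\pi^{-1}(p_i)) \geq 2$, with the minimum $2$ realized by a single $\bP^1$.

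Combining these, the master formula becomes
\[
e(\x) - e(\tX) = \sum_{i=1}^{k} \bigl( (e(\pi^{-1}(p_i)) - 1) + \mu_i \bigr),
\]
and each summand is at least $2$, because $e(\pi^{-1}(p_i)) - 1 \geq 1$ and $\mu_i \geq 1$. Therefore $e(\x) - e(\tX) \geq 2k = 2\,|\Sing(X)|$, with equality if and only if every summand equals $2$, that is $\mu_i = 1$ for all $i$ --- equivalently, every $p_i$ is an ODP. This yields both implications at once, the ``if'' direction being the case where all $\mu_i = 1$ (so each summand is forced to $2$). The step demanding the most care is the lower bound $e(\pi^{-1}(p_i)) \geq 2$: I must exclude exceptional fibres of Euler number $\leq 1$, namely rational components carrying nodes or cycles in the dual graph, which I would settle through the structure theory of fibres of small contractions of smooth threefolds together with the crepancy relation above. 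A secondary technical point is the Euler-number bookkeeping in the gluing formula, which I would make rigorous by fixing a good Milnor representative so that the boundary links genuinely match on the two sides.
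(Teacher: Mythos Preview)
Your proposal is correct and follows essentially the same route as the paper's proof. The paper quotes the identity $e(\x)-e(\tX)=\sum m(p_i)+\sum(e(C_i)-1)$ from \cite{ross11} and the tree-of-$\bP^1$'s structure of the exceptional fibres from \cite{pinkh}, then concludes exactly as you do via the inequalities $m(p_i)\geq 1$, $e(C_i)-1\geq 1$; you instead sketch both inputs directly (the gluing/Milnor-fibre computation and the crepant-contraction fibre structure), but the logical skeleton is identical.
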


For 3-dimensional complete intersection varieties in a product of a projective space and a smooth projective variety, we give the formulas of the difference of the Euler numbers and the number of singularities involving Chern classes of vector bundles (cf. Proposition \ref{eulchern0} and Corollary \ref{cicynumsing}).

Another ingredient is a \emph{Bertini-type theorem for vector bundles} (cf.~Theorem \ref{berti}). The necessity for such a result with weaker positivity assumptions comes from the fact that the CICY 3-folds under consideration are not always cut out by \emph{ample divisors}. Combining with the original ideas in \cite{cdls, gh2}, we prove that the singular Calabi--Yau $X$ defined by the determinantal equation (and other equations) has isolated singularities and the determinantal contraction is a small resolution of $X$ (cf.~Theorem \ref{detcon}). According to Proposition \ref{smalltraodp-intro}, it follows that the determinantal contraction is a small resolution of a Calabi--Yau conifold as expected. We also give a formula of the second Betti number of CICYs (cf. Proposition \ref{hodgnum11}).

In the final section, we discuss the relationship between small transitions and conifold transitions. We introduce the primitive small transitions (cf. Definition \ref{pritran}) and prove the following result:

\begin{theorem} [$=$ Theorem \ref{main2}] \label{main2-intro}
Let $\pi : \x \rightarrow X$ be a small projective resolution of a Calabi--Yau 3-fold $X$. If the natural closed immersion $\Def(\widehat{X}) \hookrightarrow \Def(X)$ of Kuranishi spaces is an isomorphism then the singularities of $X$ are ODPs. Moreover, the number of ODPs is equal to the relative Picard number $\rho(\x / X)$.
\end{theorem}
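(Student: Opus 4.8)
The plan is to reduce the theorem to a local computation of $R^1\pi_*T_{\x}$ at the singular points, so I begin with the standard reductions. Since $X$ is Calabi--Yau, its singularities are isolated Gorenstein terminal, hence compound Du Val (cDV) hypersurface singularities; thus for each $p\in\Sing(X)$ the local deformation space $T^1_{X,p}=(\mathcal{E}xt^1(\Omega^1_X,\sO_X))_p$ is the Tjurina algebra of the hypersurface germ, and by the classical characterization $(X,p)$ is an ODP if and only if $\dim_{\bC}T^1_{X,p}=1$. The first assertion is therefore equivalent to showing $\dim T^1_{X,p}=1$ for every $p$. Writing $\mathcal{T}_X=\mathcal{H}om(\Omega^1_X,\sO_X)$ for the reflexive tangent sheaf and using that $\pi$ is small, so $\pi_*T_{\x}=\mathcal{T}_X$, I would place two four-term exact sequences side by side, the Leray sequence
\[
0\to H^1(X,\mathcal{T}_X)\to H^1(\x,T_{\x})\xrightarrow{\ b\ }H^0(X,R^1\pi_*T_{\x})\xrightarrow{\ c\ }H^2(X,\mathcal{T}_X),
\]
and the local-to-global $\Ext$ sequence
\[
0\to H^1(X,\mathcal{T}_X)\to T^1_X\xrightarrow{\ \beta\ }\bigoplus_{p}T^1_{X,p}\xrightarrow{\ \gamma\ }H^2(X,\mathcal{T}_X).
\]

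The contraction $\x\to X$ induces a morphism from the first sequence to the second that is the identity on both $H^1(X,\mathcal{T}_X)$ and $H^2(X,\mathcal{T}_X)$, with middle maps $dF\colon H^1(\x,T_{\x})\to T^1_X$ (the differential of the given immersion) and $\phi\colon H^0(X,R^1\pi_*T_{\x})\to\bigoplus_pT^1_{X,p}$; because $c=\gamma\circ\phi$, the map $\phi$ carries $\ker c$ into $\ker\gamma$. An isomorphism of Kuranishi spaces forces the equality of tangent dimensions $h^1(\x,T_{\x})=\dim T^1_X$, which I denote $(\star)$. Replacing the targets of $b$ and $\beta$ by $\ker c$ and $\ker\gamma$ yields two short exact sequences with vertical maps $(\mathrm{id},dF,\phi|_{\ker c})$, and applying the snake lemma, together with the injectivity of $dF$ (it is the differential of a closed immersion), gives $\mathrm{coker}(dF)\cong\mathrm{coker}(\ker c\to\ker\gamma)$, whence
\[
\dim T^1_X-h^1(\x,T_{\x})=\dim\ker\gamma-\dim\ker c\ \ge\ 0.
\]
Thus $(\star)$ holds precisely when $\phi$ maps $\ker c$ onto $\ker\gamma$, that is $\ker\gamma\subseteq\operatorname{im}\phi$, and the whole problem is reduced to the local assertion that $(R^1\pi_*T_{\x})_p=0$ if and only if $(X,p)$ is an ODP, plus the global statement that a nonzero local contribution cannot be absorbed into $\ker\gamma$.

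For the local computation I would invoke the theorem on formal functions, $(R^1\pi_*T_{\x})_p=\varprojlim_n H^1(E_p^{(n)},T_{\x}|_{E_p^{(n)}})$, where $E_p=\pi^{-1}(p)$ is the connected reduced exceptional fibre, a tree of smooth rational curves. If $(X,p)$ is an ODP then $E_p$ is a single $(-1,-1)$-curve $C$, and $0\to T_C\to T_{\x}|_C\to N_{C/\x}\to0$ reads $0\to\sO(2)\to T_{\x}|_C\to\sO(-1)^{\oplus2}\to0$, giving $H^1(T_{\x}|_C)=0$; the analogous vanishing on each infinitesimal neighbourhood yields $(R^1\pi_*T_{\x})_p=0$. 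Conversely, if $(X,p)$ is not an ODP then $E_p$ either contains a component whose normal bundle is not $\sO(-1)^{\oplus2}$, in which case $H^1(T_{\x}|_C)\ne0$ already, or has at least two components, in which case a Mayer--Vietoris argument along the nodes of $E_p$ — where the evaluations $H^0(T_{\x}|_{C_i})\to T_{\x,x}$ only span the distinct tangent lines $T_xC_i$ — produces a nonzero cokernel, so $(R^1\pi_*T_{\x})_p\ne0$. Establishing this dichotomy uniformly over all cDV germs admitting a small resolution, most cleanly through the length classification of Katz--Morrison, is the technical heart, and I expect it to be the main obstacle; a useful cross-check is the geometric picture in which the simultaneous resolution over the common base degenerates a configuration of disjoint $(-1,-1)$-curves, the non-ODP case corresponding exactly to collisions or higher length.

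It then remains to upgrade the local nonvanishing to the strict global inequality $\dim\ker\gamma>\dim\ker c$. Here I would use the Calabi--Yau hypothesis $K_X\sim0$, which gives $\mathcal{T}_X\cong\Omega^{[2]}_X$ and, via Serre duality on the smooth resolution $\x$, a symmetry that separates the classes arising from $R^1\pi_*T_{\x}$ from the honest smoothing directions in $\ker\gamma$, so that a singularity worse than an ODP makes $\dim T^1_X-h^1(\x,T_{\x})>0$ and contradicts $(\star)$. Once every $p$ is an ODP we have $R^1\pi_*T_{\x}=0$, the Leray sequence gives $H^1(\x,T_{\x})\cong H^1(X,\mathcal{T}_X)$, and $(\star)$ then forces $\ker\gamma=0$, i.e. $\gamma$ injective. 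Finally, for a conifold the image of $\gamma$ is governed by the homology relations among the exceptional $(-1,-1)$-curves, so that $\dim\ker\gamma$ equals the defect $|\Sing(X)|-\rho(\x/X)$; the vanishing $\ker\gamma=0$ therefore yields $|\Sing(X)|=\rho(\x/X)$, which is the second assertion.
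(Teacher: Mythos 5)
Your global setup is sound as far as it goes: since $\Def(\x)$ and $\Def(X)$ are both smooth (Bogomolov--Tian--Todorov, resp.\ Namikawa), the hypothesis is equivalent to equality of tangent dimensions, and your ladder comparing the Leray sequence for $T_{\x}$ with the local-to-global $\Ext$ sequence for $T^1_X$ (granting the existence of the commuting map $\phi$, which needs a justification but is standard) correctly translates this into $\phi(\ker c)=\ker\gamma$. The local dichotomy --- $(R^1\pi_*T_{\x})_p=0$ if and only if $p$ is an ODP --- is also true and can be established along the lines you sketch. The genuine gap is the step you dispatch with ``Serre duality \dots\ a symmetry that separates the classes'': no mechanism is given there, and what that step must prove is that a non-ODP point forces $\dim\ker\gamma>\dim\ker c$, i.e.\ that some globalizable first-order deformation of the singularities is \emph{not} induced by a deformation of $\x$. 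Modulo your local dichotomy this is exactly the contrapositive of the theorem, so the proposal is circular at its decisive point: all of the global content --- why the extra local deformation directions at a bad singularity cannot be invisible to $T^1_X$ --- is concentrated in that one unexplained sentence. Note that even in the simplest instance (one singular point, irreducible exceptional curve) the missing statement is precisely \cite[(5.1)]{gros}, a substantial theorem which the paper does not reprove but uses as the base of its induction.

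The paper's actual mechanism at this point is quite different and is what your outline lacks a counterpart to. Working with $\Omega^2_{\x}\cong T_{\x}$, it uses the local cohomology sequence $0\to H^1(\Omega^2_{\x})\to H^1(U,\Omega^2_U)\to\bigoplus_p H^2_{\pi^{-1}(p)}(\Omega^2_{\x})\xrightarrow{\ \alpha\ }H^2(\Omega^2_{\x})$, where the image of $\alpha$ is the span of the fundamental classes of the exceptional curves; equality of tangent dimensions makes $\alpha$ injective, which pins $\dim\bigoplus_p H^2_{\pi^{-1}(p)}(\Omega^2_{\x})$ down to $\rho(\x/X)$, and Lemma \ref{diff} then bounds the number of irreducible exceptional curves by $\rho$. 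The conclusion that every exceptional fibre is a single $(-1,-1)$-curve is extracted from this numerical fact by induction on $\rho$, contracting faces of $\overline{NE}(\x/X)$ via the Cone Theorem (if two exceptional curves met, contracting the face they span would, by the induction hypothesis, produce two ODPs with disjoint exceptional curves --- a contradiction), with Gross's theorem as the case $\rho=1$. By contrast, nothing in your argument pins down $\dim\ker\gamma$ or produces the extra globalizable deformation at a bad point. Your final count of ODPs, via Friedman's description of $\operatorname{im}\beta$ as the space of relations among the curve classes, is correct --- but only once all singularities are already known to be ODPs.
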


Theorem \ref{main2-intro} is a generalization of the case of relative Picard number one which have been studied in \cite[(5.1)]{gros}. Using the deformation properties of $X$ and $\x$ and the minimal model theory, we will prove it by induction on the relative Picard number.

\medskip

\emph{Acknowledgements.} This article is part of my Ph.D. thesis at National Taiwan University. I am very grateful to my advisor Professor Chin-Lung Wang for his constant encouragement and guidance, and to Taida Institute for Mathematical Sciences and National Taiwan University for providing an excellent environment. I thank Professors Chen-Yu Chi and Hui-Wen Lin for very helpful discussions, Professors Yoshinori Namikawa and Tristan H$\ddot{\mathrm{u}}$bsch for answering my questions, and Professor Bert van Geemen for pointing out a mistake in Example \ref{doubleso}. Finally I thank the Ministry of Science and Technology (MOST, Taiwan) for its financial support.

\section{Preliminaries} \label{prelsec}

Let $\sigma : \sE \rightarrow \sF$ be a morphism of vector bundles of ranks $m$ and $n$ on a variety $M$. Note that there is a natural bijection between morphisms $\sE \rightarrow \sF$ and global sections of $\sE^{\vee} \otimes \sF$.

For $k \leqslant \mathrm{min}(m, n)$, we define the $k$-th degeneracy locus of $\sigma$ by
$$
D_k(\sigma) = \{ x \in M \,|\, \rk(\sigma(x)) \leqslant k \}.
$$
Its ideal is locally generated by $(k + 1)$-minors of a matrix for $\sigma$. We can show that the codimension of $D_k(\sigma)$ in $M$ is less than or equal to $(m - k)(n - k)$ \cite[Theorem 14.4 (b)]{fulto}, which is called its \textit{expected codimension}. Notice that the $0$-th degeneracy locus of $\sigma$ is the zero scheme $Z(\sigma)$.

Now we state a Bertini-type theorem for vector bundles. The following statement is taken from \cite[(2.8)]{ott}.

\begin{theorem} [\cite{ott}] \label{berti}
Let $\sE$ and $\sF$ be vector bundles of ranks $m$ and $n$ on a smooth variety $M$ and let $\sE^{\vee} \otimes \sF$ be generated by global sections. If $\sigma : \sE \rightarrow \sF$ is a general morphism, then one of the following holds:
\begin{enumerate}
  \item\label{berti1} $D_k(\sigma)$ is empty;
  \item\label{berti2} $D_k(\sigma)$ has expected codimension $(m - k)(n -k)$ and the singular locus of $D_k(\sigma)$ is $D_{k - 1}(\sigma)$.
\end{enumerate}
\end{theorem}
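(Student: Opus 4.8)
The plan is to reduce the statement to the local geometry of the generic determinantal variety and then transport that geometry to $M$ by a parametrized Bertini (transversality) argument, with the global generation hypothesis supplying the transversality. First I would record the classical facts about the universal determinantal variety $\Sigma_j = \{A \in \Hom(\bC^m, \bC^n) : \rk(A) \leqslant j\}$: it is irreducible of codimension $(m-j)(n-j)$, and $\Sing(\Sigma_j) = \Sigma_{j-1}$, the smooth part $\Sigma_j \setminus \Sigma_{j-1}$ being a single orbit under the $GL_m \times GL_n$-action. These are exactly the model properties that conclusion \ref{berti2} asserts for $D_k(\sigma)$, so the whole point is to show $D_k(\sigma)$ locally looks like $\Sigma_k$.

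Next I would form the universal incidence variety
$$ \mathcal{I}_j = \{ (\sigma, x) \in V \times M : \rk(\sigma(x)) \leqslant j \}, \qquad V \coloneqq H^0(M, \sE^{\vee} \otimes \sF), $$
and analyse its first projection $p : \mathcal{I}_j \to M$. The hypothesis that $\sE^{\vee} \otimes \sF$ is globally generated means precisely that the evaluation map $V \to (\sE^{\vee} \otimes \sF)_x \cong \Hom(\sE_x, \sF_x)$ is surjective for every $x \in M$; hence each fibre $p^{-1}(x)$ is the preimage of $\Sigma_j$ under a surjective linear map, so it is irreducible of codimension $(m-j)(n-j)$ with singular locus the preimage of $\Sigma_{j-1}$. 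Since $M$ is smooth and irreducible and $p$ is, after trivializing $\sE^{\vee} \otimes \sF$ locally on $M$, a product projection with this constant fibre, I can conclude that $\mathcal{I}_j$ is irreducible of codimension $(m-j)(n-j)$ in $V \times M$ and that $\Sing(\mathcal{I}_j) = \mathcal{I}_{j-1}$.

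The next step is generic smoothness. Restricting the second projection $q : \mathcal{I}_j \to V$ to the smooth locus $\mathcal{I}_j \setminus \mathcal{I}_{j-1}$ and invoking generic smoothness in characteristic zero, I obtain a dense open $U_j \subseteq V$ over which this restriction is smooth; for $\sigma \in U_j$ the fibre $D_j(\sigma) \setminus D_{j-1}(\sigma)$ is therefore smooth of dimension $\dim M - (m-j)(n-j)$, or empty. Intersecting the finitely many dense opens $U_j$ for $j \leqslant k$ produces a single general $\sigma$ for which all strata are simultaneously smooth of the expected dimension. Because $\mathrm{codim}\, D_{j-1}(\sigma) = (m-j+1)(n-j+1) > (m-j)(n-j) = \mathrm{codim}\, D_j(\sigma)$, the top stratum is dense in $D_k(\sigma)$, which yields the expected codimension in \ref{berti2} together with the inclusion $\Sing(D_k(\sigma)) \subseteq D_{k-1}(\sigma)$; if instead every stratum is empty we are in case \ref{berti1}.

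The main obstacle is the reverse inclusion $D_{k-1}(\sigma) \subseteq \Sing(D_k(\sigma))$: generic smoothness only bounds the singular locus from above, so I must still show that $D_k(\sigma)$ is genuinely singular along $D_{k-1}(\sigma)$. For this I would strengthen the previous step to a transversality statement, namely that for general $\sigma$ the section $\sigma$, viewed as a map of $M$ into the total space of $\sE^{\vee} \otimes \sF$, meets the relative determinantal subvariety and each of its strata transversally (again powered by the surjectivity of the evaluation maps). Transversality forces $D_k(\sigma)$ to be, locally analytically near a point of $D_{j-1}(\sigma) \setminus D_{j-2}(\sigma)$, a product of $\Sigma_k$ with a smooth factor, so the singularity type of $\Sigma_k$ along $\Sigma_{k-1}$ is preserved; this gives $\Sing(D_k(\sigma)) = D_{k-1}(\sigma)$ and completes case \ref{berti2}.
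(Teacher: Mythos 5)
The paper contains no proof of this theorem to compare against: it is quoted verbatim from \cite[(2.8)]{ott} and used as a black box, so your proposal can only be judged on its own terms. On those terms, it is the standard Kleiman-style parametric transversality argument, and its architecture is sound: the classical facts about the generic determinantal variety $\Sigma_j$, the incidence correspondence $\mathcal{I}_j \subseteq V \times M$ fibred over $M$ via the evaluation maps (whose surjectivity is exactly what global generation provides), generic smoothness in characteristic zero applied to the projections $\mathcal{I}_j \setminus \mathcal{I}_{j-1} \to V$ for all $j \leqslant k$, and finally transversality combined with the local structure of determinantal varieties. This is almost certainly close to Ottaviani's own argument in spirit.

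Two steps need repair, both fixable. First, in your third paragraph the inference from the dimension count of the strata to ``the top stratum is dense in $D_k(\sigma)$'' is not valid: smoothness of each stratum does not preclude an irreducible component of $D_k(\sigma)$ lying entirely inside $D_{k-1}(\sigma)$ (necessarily of smaller dimension), and then neither the expected codimension nor the density claim follows. To rule this out you must either invoke the Eagon--Northcott/Macaulay bound that \emph{every} irreducible component of a degeneracy locus has codimension at most $(m-k)(n-k)$ --- precisely the fact the paper records just before the theorem, citing \cite[Theorem 14.4 (b)]{fulto} --- or run your final transversality argument at an \emph{arbitrary} point of $D_k(\sigma)$ rather than only at points of $D_{k-1}(\sigma)$, since the local model gives pure expected codimension everywhere and renders the density claim superfluous. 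Second, that local model is misstated: near a point $x$ with $\rk(\sigma(x)) = j \leqslant k$, transversality to the stratum of rank-$j$ matrices identifies $D_k(\sigma)$ locally analytically with the product of a smooth factor and the variety of $(m-j)\times(n-j)$ matrices of rank at most $k-j$, not with a product involving $\Sigma_k$ itself; since this slice variety has codimension $(m-k)(n-k)$ and is singular exactly along its rank $\leqslant k-j-1$ locus, the corrected model yields simultaneously the expected codimension and $\Sing(D_k(\sigma)) = D_{k-1}(\sigma)$. Finally, a technical point: for ``general'' to mean membership in a Zariski open subset of a variety of sections, one should replace $H^0(M, \sE^{\vee}\otimes\sF)$ by a finite-dimensional generating subspace when $M$ is not projective; in the paper's applications $M$ is projective, so this is harmless.
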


Here the "general" means that there is a Zariski open set in the vector space $H^0(\sE^{\vee} \otimes \sF)$ such that each global section $\sigma$ in the open set satisfies (\ref{berti1}) or (\ref{berti2}).

\begin{remark}
Let $D$ be a Cartier divisor on $M$. Assume that the linear system $\Lambda \coloneqq | \sO(D)|$ is base point free. Since the $(- 1)$-th degeneracy locus is empty, the classical Bertini's second theorem follows from Theorem \ref{berti} by taking $k = 0$, $\sE = \sO$ and $\sF = \sO (D)$, i.e., a general member of $\Lambda$ is smooth. We also know, by the Bertini's first theorem, that if $\Lambda$ is not composed of a pencil then its general member is irreducible. However, the general degeneracy locus $D_k(\sigma)$ may not be connected.
\end{remark}

For the case that $\sE$ is a trivial line bundle, $\sigma : \sO_M \rightarrow \sF$ corresponds to a global section of $\sF$. The wedge product by the section gives rise to a complex
$$
\sO_{M} \to \sF \to \wedge^2 \sF \to \cdots \to \wedge^{n - 1}\sF \to \wedge^n \sF.
$$
The dual complex
\begin{equation} \label{kosz}
    K^{\bullet}(\sigma) : \wedge^{n}\sF^{\vee} \to \wedge^{n - 1} \sF^{\vee} \to \cdots \to \wedge^2 \sF^{\vee} \to \sF^{\vee} \xrightarrow{\sigma^{\vee}} \sO_{M}
\end{equation}
is called the \emph{Koszul complex} of $Z(\sigma)$. Note that the image of $\sigma^{\vee}$ is the ideal sheaf of $Z(\sigma)$. We say that $Z(\sigma)$ is \textit{complete intersection} if the sequence
$$
0 \to K^{\bullet}(\sigma) \to \sO_{Z(\sigma)} \to 0.
$$
is exact, that is, the Koszul complex (\ref{kosz}) is a resolution of $\sO_{Z(\sigma)}$. If $M$ is Cohen-Macaulay, then $Z(\sigma)$ is complete intersection if and only if its codimension in $M$ is equal to the rank $n$ of $\sF$ \cite[p.431]{fulto}.

If $M$ is a projective variety, endowed with an ample divisor $\sO(1)$, then the Hilbert polynomial of a complete intersection $Z(\sigma)$ can be computed by the Koszul complex (\ref{kosz}), that is, for $l \in \bZ$
\begin{equation}\label{hilbpol}
    \chi(\sO_{Z(\sigma)} \otimes \sO(l)) = \sum_{i = 0}^n (- 1)^{i} \chi(\wedge^i \sF^{\vee} \otimes \sO(l)).
\end{equation}

A birational morphism is called \emph{small} if the exceptional set has codimension at least two. The following is a simple criterion of ODPs for a small resolution $\x \to X$ if $X$ admits a smoothing.

\begin{proposition} \label{smalltraodp}
Let $\x \rightarrow X$ be a small resolution of a Gorenstein terminal 3-fold $X$ and $\tX$ a smoothing of $X$. Then the difference of the topological Euler numbers $e(\x) - e(\tX)$ equals the number $2 \left| \Sing(X) \right|$ if and only if the singularities of $X$ are ODPs.
\end{proposition}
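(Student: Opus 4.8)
The plan is to localize the Euler-number comparison at the finitely many singular points and to balance, at each point, the contribution of the small resolution against that of the smoothing. First I would record that a Gorenstein terminal $3$-fold has only isolated singularities, say $\Sing(X) = \{p_1, \dots, p_k\}$, so that both $\x$ and $\tX$ coincide with $X$ away from this finite set. Choosing contractible (Milnor-ball) neighbourhoods $C_i \ni p_i$ with link $L_i = \partial C_i$, and setting $X_0 = X \setminus \bigcup_i C_i^{\circ}$, the additivity of the topological Euler characteristic over this decomposition gives
$$
e(\x) - e(\tX) = \sum_{i = 1}^{k} \bigl( e(\widehat{C}_i) - e(F_i) \bigr),
$$
where $\widehat{C}_i = \pi^{-1}(C_i)$ and $F_i$ is the local smoothing of $X$ at $p_i$; the term $e(X_0)$ and the link contributions $\sum_i e(L_i)$ are common to both and cancel, since $\pi$ is an isomorphism near each $L_i$ and the Milnor fibre $F_i$ has boundary $L_i$ as well. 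Because $C_i$ is contractible and $\pi$ is proper and an isomorphism over $C_i \setminus \{p_i\}$, the resolution $\widehat{C}_i$ deformation retracts onto the exceptional fibre $E_i = \pi^{-1}(p_i)$, so $e(\widehat{C}_i) = e(E_i)$.

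Next I would evaluate the two local invariants. By Reid's description a Gorenstein terminal $3$-fold singularity is an isolated compound Du Val (hence hypersurface) singularity, so the local smoothing $F_i$ is, up to diffeomorphism, the Milnor fibre and
$$
e(F_i) = 1 + (-1)^3 \mu_i = 1 - \mu_i,
$$
where $\mu_i \geq 1$ is the Milnor number; by the Morse lemma $\mu_i = 1$ holds if and only if $p_i$ is an ODP. For the exceptional side, terminal singularities are rational, so $R^1 \pi_* \sO_{\x} = 0$; by the theorem on formal functions this forces $H^1(E_i, \sO_{E_i}) = 0$, whence $E_i$ is a connected tree of smooth rational curves. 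If $E_i$ has $r_i$ components then $e(E_i) = r_i + 1 \geq 2$, with equality exactly when $E_i$ is a single $\bP^1$.

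Combining, the local contribution at $p_i$ is
$$
e(\widehat{C}_i) - e(F_i) = e(E_i) + \mu_i - 1 \geq 2 + 1 - 1 = 2,
$$
and equality forces $\mu_i = 1$, i.e. $p_i$ an ODP; conversely an ODP contributes $e(\bP^1) - e(S^3) = 2 - 0 = 2$. Summing over $i$ yields
$$
e(\x) - e(\tX) \geq 2\,|\Sing(X)|,
$$
with equality if and only if every $p_i$ is an ODP, which establishes both implications at once. The main obstacle is the rigorous bookkeeping of the first paragraph—justifying the cancellation of the link terms and the homotopy equivalence $\widehat{C}_i \simeq E_i$—together with supplying the two structural inputs on which the sharp inequality rests: that the exceptional fibre of a small resolution of a rational singularity is a tree of rational curves (giving $e(E_i) \geq 2$), and that Milnor number one characterises the ODP. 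Once these are in place the statement follows from the single chain of (in)equalities above.
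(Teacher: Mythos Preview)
Your argument is correct and follows essentially the same route as the paper. The paper's proof quotes the identity
\[
e(\x) - e(\tX) = \sum_i m(p_i) + \sum_i \bigl(e(C_i) - 1\bigr)
\]
from \cite{ross11} and the fact that each exceptional fibre $C_i$ is a transversal union of smooth rational curves from \cite{pinkh}, and then concludes exactly as you do by the inequality $m(p_i) + n_i \geq 2$. Your Milnor-ball bookkeeping reproduces the Rossi identity from scratch, and your use of rationality plus formal functions to get $H^1(E_i,\sO_{E_i})=0$ is the standard input behind Pinkham's statement; the only point worth tightening is that $H^1(\sO_{E_i})=0$ alone gives $p_a(E_i)=0$, and to reach $e(E_i)=r_i+1$ you are implicitly using that $E_i$ is reduced with at worst nodal intersections---this is exactly what \cite{pinkh} supplies.
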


For the convenience of the reader, we supply a proof here.

\begin{proof}
Let $C_i$ be the exceptional curve over an isolated hypersurface singularity $p_i$. We have the identity of the topological Euler numbers (for a proof see \cite[Theorem 7]{ross11})
\begin{equation*}
e(\x) - e(\tX) = \sum m(p_i) + \sum \left(e(C_i) - 1 \right),
\end{equation*}
where $m(p_i)$ is the Milnor number of $p_i$. According to \cite[Proposition 1]{pinkh}, the exceptional curve $C_i$ is a union of smooth rational curves which meet transversally and thus the number $e(C_i) - 1$ is equal to $n_i$ the number  of irreducible components of $C_i$. Observe that $m(p_i)$ and $n_i$ are greater than or equal to one.  Then $\sum m(p_i) + \sum n_i \geqslant 2 \left| \Sing(X) \right|$, and the equality holds if and only if $n_i = m(p_i) = 1$ for all $i$.
\end{proof}

The following lemma will be used in Remark \ref{genermk} and the proof of Theorem \ref{detcon}.

\begin{lemma} \label{prozari}
The product Zariski topology on $\bC^n \times \bC^m$ is strictly coarser than the Zariski topology on $\bC^{n + m}$.
\end{lemma}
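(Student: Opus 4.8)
The plan is to establish two things separately: that every set open in the product Zariski topology is open in the full Zariski topology on $\bC^{n+m}$ (so the product topology is coarser), and that this inclusion of topologies is proper by exhibiting a set that is Zariski-closed in $\bC^{n+m}$ but not closed in the product topology.

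For the ``coarser'' direction I would reduce to a basis. A basis for the product topology consists of the rectangles $U \times V$ with $U \subseteq \bC^n$ and $V \subseteq \bC^m$ Zariski-open, and each such $U$ (resp.\ $V$) is a union of principal opens $D(f) = \{f \neq 0\}$ with $f \in \bC[x_1, \dots, x_n]$ (resp.\ $g \in \bC[y_1, \dots, y_m]$). Since products distribute over unions, it suffices to observe that for such $f$ and $g$ one has $D(f) \times D(g) = D(fg)$ as subsets of $\bC^{n+m}$, where $fg$ is regarded as a polynomial in all $n+m$ variables; this set is manifestly Zariski-open. Taking unions then shows that every rectangle, and hence every product-open set, is Zariski-open in $\bC^{n+m}$.

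For strictness I would take the hyperplane $Z = \{(x,y) : x_1 = y_1\} \subseteq \bC^{n+m}$, which is Zariski-closed as the zero locus of $x_1 - y_1$. To see it is not closed in the product topology, suppose for contradiction that its complement $Z^c$ were a union of rectangles. Choosing a point $(a,b) \in Z^c$ (so $a_1 \neq b_1$), there would be a nonempty rectangle with $(a,b) \in U \times V \subseteq Z^c$. The crux is that the image of a nonempty Zariski-open set under a coordinate projection $\bC^n \to \bC$ is cofinite: restricting $U$ to a coordinate line through any of its points yields a nonempty, hence cofinite, open subset of that line, whose first coordinates already form a cofinite subset of $\bC$. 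Thus both $\{x_1 : x \in U\}$ and $\{y_1 : y \in V\}$ are cofinite in $\bC$, so they share a common value $c$; picking $x \in U$ and $y \in V$ with $x_1 = y_1 = c$ produces a point of $(U \times V) \cap Z$, contradicting $U \times V \subseteq Z^c$. Hence $Z^c$ is not product-open and $Z$ is not product-closed.

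The only genuine subtlety is the cofiniteness of coordinate projections of Zariski-open sets, and I expect this to be the main point to state cleanly, since it is precisely what forces any two nonempty rectangles to interact with the diagonal-type locus $Z$. By contrast, the containment direction and the algebraic identity $D(f) \times D(g) = D(fg)$ are routine.
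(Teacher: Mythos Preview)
Your proof is correct. For the ``coarser'' direction, your reduction to principal opens and the identity $D(f)\times D(g)=D(fg)$ is a minor variant of the paper's sketch, which instead writes a general rectangle $(\bC^n\setminus V(I))\times(\bC^m\setminus V(J))$ directly as the complement of a Zariski closed subset of $\bC^{n+m}$; the underlying idea is identical. The real difference lies in the strictness claim: the paper merely remarks, with no example or argument, that not every Zariski open subset of $\bC^{n+m}$ is product-open, whereas you exhibit an explicit witness $Z=\{x_1=y_1\}$ and justify it via the cofiniteness of coordinate projections of nonempty Zariski opens. Your version is thus more complete, while the paper's is intentionally terse (it is labeled a ``Sketch of proof'').
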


\begin{proof}[Sketch of proof]
Let $I$ and $J$ be ideals of $\bC [x_1, \cdots, x_n]$ and $\bC [y_1, \cdots, y_m]$, and let $I^e$ and $J^e$ be ideals generated by $I$ and $J$ in $\bC [x_1, \cdots, x_n, y_1, \cdots, y_m]$ respectively. Let $V(I)$ and $V(J)$ denote Zariski closed subsets defined by $I$ and $J$ respectively. Then the standard open subset $(\bC^n \setminus V(I)) \times (\bC^m \setminus V(J))$ of the product topology is the complement set of the Zariski closed subset $V(I^e) \cap V(J^e)$ of $\bC^{n + m}$. Remark that not every open subset in the Zariski topology on $\bC^{n + m}$ is open in the product Zariski topology.
\end{proof}

We conclude this section with an elementary lemma, which will be used in the proof of Theorem \ref{main2}.

\begin{lemma} \label{diff}
Let $C = \bigcup C_i$ be a curve in a smooth $3$-fold $Y$ such that the irreducible components $C_i$ meet in a finite set of points. Then there exists an injection $\bigoplus_i H^2_{C_i}(Y, \Omega^2_Y) \hookrightarrow H^2_C(Y, \Omega^2_Y)$. Moreover, it is an isomorphism if $C_i$ are mutually disjoint.
\end{lemma}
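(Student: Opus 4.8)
The plan is to interpret both sides as local cohomology groups with supports and to exploit that $\Omega^2_Y$ is locally free on the smooth $3$-fold $Y$, hence a Cohen--Macaulay sheaf of maximal depth along every closed subset. The main tool is the Mayer--Vietoris sequence in local cohomology: for closed subsets $Z_1, Z_2 \subseteq Y$ and any coherent sheaf $\mathscr{F}$ there is a long exact sequence
$$\cdots \to H^i_{Z_1 \cap Z_2}(Y, \mathscr{F}) \to H^i_{Z_1}(Y, \mathscr{F}) \oplus H^i_{Z_2}(Y, \mathscr{F}) \to H^i_{Z_1 \cup Z_2}(Y, \mathscr{F}) \to H^{i+1}_{Z_1 \cap Z_2}(Y, \mathscr{F}) \to \cdots .$$
The decisive input is a vanishing statement: if $W \subseteq Y$ is a finite set of points, then $H^i_W(Y, \Omega^2_Y) = 0$ for $i < 3$, and in particular $H^2_W(Y, \Omega^2_Y) = 0$. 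This holds because $W$ has codimension $3$ and $\Omega^2_Y$ is locally free, so locally at each point of $W$ the group is computed by the local cohomology of a free module over a $3$-dimensional regular local ring supported at the maximal ideal, which is concentrated in top degree $3$ (and the local cohomology sheaf being supported on points, no higher $Y$-cohomology intervenes).

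With these two ingredients I would argue by induction on the number $r$ of components. For $r = 2$, set $Z_1 = C_1$, $Z_2 = C_2$, so that $Z_1 \cap Z_2 = C_1 \cap C_2 =: W$ is a finite set of points. The Mayer--Vietoris sequence at $i = 2$ contains
$$H^2_W(Y, \Omega^2_Y) \to H^2_{C_1}(Y, \Omega^2_Y) \oplus H^2_{C_2}(Y, \Omega^2_Y) \to H^2_{C_1 \cup C_2}(Y, \Omega^2_Y),$$
and since $H^2_W(Y, \Omega^2_Y) = 0$ the middle map is injective, which is the desired injection.

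For the inductive step put $Z_1 = C_1 \cup \cdots \cup C_{r-1}$ and $Z_2 = C_r$; then $Z_1 \cap Z_2 = \bigcup_{i<r}(C_i \cap C_r)$ is again a finite set of points, so the same Mayer--Vietoris argument gives an injection $H^2_{Z_1}(Y, \Omega^2_Y) \oplus H^2_{C_r}(Y, \Omega^2_Y) \hookrightarrow H^2_C(Y, \Omega^2_Y)$, and composing with the injection $\bigoplus_{i<r} H^2_{C_i}(Y, \Omega^2_Y) \hookrightarrow H^2_{Z_1}(Y, \Omega^2_Y)$ supplied by the induction hypothesis yields the claim for $r$ components. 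When the $C_i$ are mutually disjoint, every intersection $Z_1 \cap Z_2$ is empty, so all connecting groups vanish and each Mayer--Vietoris map is forced to be an isomorphism (equivalently, local cohomology is additive over disjoint closed sets, the sheaves $\mathcal{H}^2_{C_i}(\Omega^2_Y)$ having disjoint supports); hence the injection becomes an isomorphism.

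The only genuine obstacle is the vanishing $H^2_W(Y, \Omega^2_Y) = 0$, which is precisely where the smoothness of $Y$ (local freeness of $\Omega^2_Y$) and the codimension-$3$ bound on the intersection points are used; once this is in hand, everything else is a formal consequence of Mayer--Vietoris and induction on the number of components.
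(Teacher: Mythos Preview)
Your proposal is correct and follows essentially the same approach as the paper: induction on the number of components combined with the Mayer--Vietoris sequence for local cohomology, with the key vanishing $H^2_{C_1\cap C_2}(Y,\Omega^2_Y)=0$ deduced from the fact that $\Omega^2_Y$ is locally free and $\mathrm{depth}_{C_1\cap C_2}\mathscr{O}_Y=3$. Your write-up is slightly more explicit about the inductive step and the disjoint case, but the argument is the same.
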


\begin{proof}
By induction on the number of components of $C$, we may assume that $C = C_1 \cup C_2$. From the Mayer--Vietoris sequence, we get
$$
H^2_{C_1 \cap C_2}(\Omega^2_Y) \rightarrow H^2_{C_1}(\Omega^2_Y) \oplus H^2_{C_2}(\Omega^2_Y) \rightarrow H^2_{C}(\Omega^2_Y).
$$
Since $\Omega^2_Y$ is locally free and $\mathrm{depth}_{C_1 \cap C_2} \mathscr{O}_Y = 3$, the local cohomology group $H^2_{C_1 \cap C_2}(\Omega^2_Y)$ vanishes (cf. \cite[III Ex.3.4]{hart}), which completes the proof.
\end{proof}

\section{Configurations and Parameter Spaces}\label{cp}

We start by introducing the configuration of \emph{complete intersection} varieties of dimension $d$ and constructing their parameter spaces.

A configuration of dimension $d$ is a pair $[ V \| \mathfrak{L}]$ of a smooth projective variety $V$ with $\dim V = m + d$ and a sequence of line bundles $\mathfrak{L} = (\mathscr{L}_1, \cdots, \mathscr{L}_m)$, where $\sL_j$ is generated by global sections. Let $X$ be a $d$-dimensional variety. The variety $X$ is said to be a member of the configuration, denoted by $X \in [ V \| \mathfrak{L}]$, if it is defined by global sections $\sigma_j$ of $\sL_j$ for $1 \leqslant j \leqslant m$.

If
\begin{center}
    $V = \prod_{i = 1}^k \bP^{n_i}$ and $\sL_j = \bigotimes_{i = 1}^k pr_i^{\ast} \sO_{\bP^{n_i}} (q^i_j)$,
\end{center}
where $pr_i : V \rightarrow \bP^{n_i}$ is the natural projection and $q^i_j \geqslant 0$ for all $i, j$, then we rewrite $[ V \| \mathfrak{L}]$ as a configuration matrix
\begin{equation}\label{conmat}
[\mathbf{n} \| \mathbf{q}]
=
\left[
\begin{array}{c||ccc}
n_1 & q^1_1 & \cdots & q^1_m \\
\vdots & \vdots & \ddots & \vdots \\
n_k & q^k_1 & \cdots & q^k_m
\end{array}
\right].
\end{equation}
The $(q^1_j, \cdots, q^k_j)$ and $\mathbf{q}$ will be called the multidegree of the line bundle $\sL_j$ and a member $X \in [\mathbf{n} \| \mathbf{q}]$ respectively. We may assume that
$$
\sum\nolimits_{i = 1}^k q^i_j \geqslant 2
$$
for all $1 \leqslant j \leqslant m$ (otherwise a hyperplane section of only one factor $\bP^n$ reduces the factor to $\bP^{n - 1}$). Note that the global sections of $\sL_j$ are multi-homogeneous polynomials of multidegree $(q^1_j, \cdots, q^k_j)$.

Two configuration matrices are said to represent the same configuration if one can go from one to the other by a permutation of the rows or of the columns other than first. We say that $[\mathbf{n}_1 \| \mathbf{q}_1]$ is a sub-configuration matrix of $[\mathbf{n} \| \mathbf{q}]$ if
$$
\left[
\begin{array}{c||cc}
\mathbf{n}_1 & \mathbf{q}_1 & \mathbf{a} \\
\mathbf{m} & \mathbf{0} & \mathbf{b}
\end{array}
\right]
$$
and $[\mathbf{n} \| \mathbf{q}]$ represent the same configuration.

In the case $V = \prod_{i = 1}^k \bP^{n_i}$, we can explain the meaning of a complete intersection $X \in [\mathbf{n} \| \mathbf{q}]$ precisely by defining a projective family for the configuration $[\mathbf{n} \| \mathbf{q}]$ whose fibers are complete intersections of multidegree $\mathbf{q}$.

In the following we will write $\underline{T}_i$ and $\underline{u}$ as a short form for indeterminates $T_{i0}, \cdots, T_{i n_i}$ and $u_1, \cdots, u_a$ respectively. Set $R = \bC [\underline{T}_1; \cdots; \underline{T}_k]$.

Let $X$ be a complete  intersection variety defined by a sequence of multi-homogeneous polynomials $\sigma= (\sigma_j)$ of multidegree $\mathbf{q}$ and of dimension $d$.
Let $$\Phi^{(1)}, \cdots, \Phi^{(a)}$$ be a basis of $\bigoplus_{j = 1}^m H^0(V, \sL_j)$ and write $\Phi^{(h)} = (\phi^{(h)}_j)$ where $\phi^{(h)}_j \in R$ with multidegree $(q^1_j, \cdots, q^k_j)$.

Let $K_{\bullet} \coloneqq K_{\bullet}(\sigma + \sum_{h = 1}^a u_h \Phi^{(h)})$ be the Koszul complex (cf. (\ref{kosz})) and
$$
D \coloneqq \mathrm{Supp}(H_1(K_{\bullet})) \subseteq \mathbb{A}^{a + N} = \Spec (\bC [\underline{u}; \underline{T}_1; \cdots; \underline{T}_k])
$$
where $N = \sum_i n_i + k$.

Set $q$ be the projection from $\mathbb{A}^{a + N}$ onto $\mathbb{A}^a$. Then $U \coloneqq q(\mathbb{A}^{a + N} \setminus D)$ is the open set of points $\underline{u} \in \mathbb{A}^a$ with $K_{\bullet}(\sigma + \sum_{h = 1}^a u_h \Phi^{(h)})$ being exact. According to that $X = Z(\sigma)$ is a complete intersection, it follows that $U$ contains the origin. Let
$$
I = \left< \left. \sigma_l + \sum_h u_h \phi^{(h)}_l \, \right| \, 1 \leqslant l \leqslant m \right>
$$
and
$
\mathscr{X} = \mathrm{Proj}\left(R [\underline{u}] / I\right).
$

Consider the projection $P : \mathscr{X} \subseteq V \times \mathbb{A}^a \rightarrow \mathbb{A}^a$ and its restriction $P_U : \mathscr{X}_U \rightarrow U$. Since the the Koszul complex $K_{\bullet}$ is exact on $U$, all fibers of $P_U$ are complete intersections of multidegree $\mathbf{q}$ and have the same Hilbert polynomial $P(t)$ which is computed by the Koszul resolution and depends on its multidegree (cf. (\ref{hilbpol})). Hence $P_U$ is a flat family with the fiber $\mathscr{X}_0 = X$ \cite[III Theorem 9.9]{hart}.

To summarize what we have proved, we get the following proposition:

\begin{proposition} \label{param}
Let $X$ be a variety. If $X \in [\mathbf{n} \| \mathbf{q}]$ then there is a Zariski open set $U$ in $H^0(V, \bigoplus_{j = 1}^m \sL_j)$, a closed point $t_0 \in U$ and a flat projective morphism $P_U : \mathscr{X}_U \rightarrow U$ with the fiber $\mathscr{X}_{t_0} = X$ such that all complete intersections in $V$ of multidegree $\mathbf{q}$ are parameterized by the pair $(U, P_U)$.
\end{proposition}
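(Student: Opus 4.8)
The statement is a summary of the family $P : \mathscr{X} \to \mathbb{A}^a$ constructed above, so the plan is to repackage that construction and supply the two things not yet spelled out explicitly: the identification of the base with the space of sections, and the parameterization property. Throughout I write $W \coloneqq H^0(V, \bigoplus_{j=1}^m \sL_j) = \bigoplus_{j=1}^m H^0(V, \sL_j)$, whose points are the tuples $\tau = (\tau_j)$ of defining sections.

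First I fix the basis $\Phi^{(1)}, \ldots, \Phi^{(a)}$ of $W$ used above. The map $\underline{u} \mapsto \sigma + \sum_{h=1}^a u_h \Phi^{(h)}$ is then an affine isomorphism $\mathbb{A}^a \xrightarrow{\sim} W$ sending the origin to the tuple $\sigma$ defining $X$. Transporting $U \subseteq \mathbb{A}^a$ along this isomorphism realizes it as a subset of $W$, and I take $t_0$ to be the point corresponding to the origin, so that $\mathscr{X}_{t_0} = Z(\sigma) = X$; that $t_0 \in U$ is exactly the hypothesis that $X$ is a complete intersection. Openness of $U$ holds because its complement is the locus where the perturbed sections fail to cut out a subscheme of the expected codimension $m$, a closed condition by upper semicontinuity of fiber dimension (matching the description via the closed set $D = \mathrm{Supp}(H_1(K_\bullet))$).

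The heart of the matter is flatness of $P_U$, and this is where the real content lies. Over $U$ the Koszul complex $K_\bullet$ is exact, hence a resolution of the structure sheaf of each fiber; by the Hilbert-polynomial formula (\ref{hilbpol}) every fiber therefore has one and the same Hilbert polynomial $P(t)$, depending only on $V$ and the multidegree $\mathbf{q}$. Since $\mathbb{A}^a$ is integral and $U$ is a nonempty open, hence integral, subscheme, constancy of the Hilbert polynomial together with \cite[III Theorem 9.9]{hart} yields that $P_U : \mathscr{X}_U \to U$ is flat and projective with fiber $X$ over $t_0$.

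It remains to record the parameterization property. Because $\Phi^{(1)}, \ldots, \Phi^{(a)}$ is a basis of $W$, every complete intersection $Y \subseteq V$ of multidegree $\mathbf{q}$ is cut out by some tuple $\tau \in W$, hence corresponds to a point $\underline{u}_Y \in \mathbb{A}^a$; since $Y$ is a complete intersection, $\underline{u}_Y$ lies in $U$ and $\mathscr{X}_{\underline{u}_Y} = Z(\tau) = Y$. Thus every such $Y$ occurs as a fiber of $P_U$, and $(U, P_U)$ parameterizes all complete intersections of multidegree $\mathbf{q}$ in $V$. I expect the flatness step to be the only one demanding genuine care: once constancy of the Hilbert polynomial across $U$ is secured through the Koszul resolution and $U$ is seen to be integral, Hartshorne's criterion applies and the remaining points are formal.
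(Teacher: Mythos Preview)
Your proposal is correct and follows essentially the same approach as the paper: the proposition is stated as a summary of the construction immediately preceding it, and you have faithfully repackaged that construction---the affine identification of $\mathbb{A}^a$ with $W$, openness of $U$ via the support of $H_1(K_\bullet)$, constancy of the Hilbert polynomial through the Koszul resolution, and flatness via \cite[III Theorem 9.9]{hart}. Your additional remarks on the parameterization property and the integrality of $U$ only make explicit what the paper leaves implicit.
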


Hence we may use the configuration $[\mathbf{n} \| \mathbf{q}]$ to denote the parameter space of $d$-dimensional  complete intersections in $V$ of multidegree $\mathbf{q}$.

To point out what the fundamental cycle and the normal bundle of a smooth complete intersection is, we state the following result by using Theorem \ref{berti}.

\begin{proposition} \label{gene1}
Let $\mathfrak{L} = (\mathscr{L}_1, \cdots, \mathscr{L}_m)$ be a sequence of globally generated line bundles over a smooth projective variety $V$ and $[V \| \mathfrak{L}]$ a configuration of dimension $d$. Then there is a Zariski open subset $U$ in $H^0(V, \bigoplus_{j = 1}^m \sL_j)$ such that $X = Z(\sigma)$ is smooth and of dimension $d$ for every element $\sigma$ in $U$. Moreover, the normal bundle of $X$ in $V$ is $\bigoplus_{j = 1}^m \sL_j |_X$ and the fundamental class $[X]$ in $A_d(V)$ is the top Chern class of $\bigoplus_{j = 1}^m \sL_j$.
\end{proposition}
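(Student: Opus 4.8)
The plan is to obtain the smoothness statement as a direct application of the Bertini-type Theorem \ref{berti}, and then to read off the normal bundle and the fundamental class from the regularity of the defining section. First I would set $M = V$, $\sE = \sO_V$ and $\sF = \bigoplus_{j = 1}^m \sL_j$, so that $\sE^{\vee} \otimes \sF = \bigoplus_{j = 1}^m \sL_j$ is generated by global sections (each $\sL_j$ being so), and a morphism $\sigma : \sO_V \to \sF$ is precisely a global section $(\sigma_j)$ of $\bigoplus_{j = 1}^m \sL_j$. Since $\rk \sE = 1$ and $\rk \sF = m$, applying Theorem \ref{berti} with $k = 0$ identifies $D_0(\sigma)$ with the zero scheme $Z(\sigma) = X$; as the $(-1)$-th degeneracy locus $D_{-1}(\sigma)$ is empty, case (\ref{berti2}) forces $X$ to be smooth of expected codimension $(1 - 0)(m - 0) = m$, hence of dimension $\dim V - m = d$. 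The required open set $U \subseteq H^0(V, \bigoplus_{j = 1}^m \sL_j)$ is the locus on which case (\ref{berti2}) holds. The alternative case (\ref{berti1}), in which $X$ is empty, does not occur generically once $d \geq 0$: the general member of each $|\sL_j|$ is a nonempty effective divisor (the $\sL_j$ being globally generated and nontrivial), and a dimension count on the incidence correspondence $\{(x, \sigma) \,|\, \sigma(x) = 0\} \subseteq V \times H^0(\bigoplus_{j = 1}^m \sL_j)$ shows the general fibre over $H^0$ has dimension $d \geq 0$.

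For the normal bundle, the essential point is that landing in case (\ref{berti2}) is equivalent to $\sigma$ being a regular section of $\sF$, i.e.\ transverse to the zero section; equivalently, the Koszul complex $K^{\bullet}(\sigma)$ of (\ref{kosz}) is a resolution of $\sO_X$, so that $X$ is a local complete intersection of codimension $m$. Then the conormal sheaf is computed from the Koszul differential $\sigma^{\vee}$ as $\sI_X / \sI_X^2 \cong \sF^{\vee}|_X = \bigoplus_{j = 1}^m \sL_j^{\vee}|_X$, and dualizing yields $N_{X/V} \cong \bigoplus_{j = 1}^m \sL_j|_X$, as asserted.

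For the fundamental class, I would invoke the standard fact that the zero scheme of a regular section of a vector bundle $\sF$ of rank $m$ represents the top Chern class, namely $[X] = c_m(\sF) \cap [V]$ in $A_d(V)$ (see \cite[\S14]{fulto}). By the Whitney sum formula the top Chern class of the direct sum factors as $c_m(\bigoplus_{j = 1}^m \sL_j) = \prod_{j = 1}^m c_1(\sL_j)$, which gives the claimed identity for $[X]$.

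The main obstacle, such as it is, lies not in any single computation but in correctly transferring the output of Theorem \ref{berti} into the regularity of $\sigma$: once it is established that $X$ is smooth of the expected codimension $m$, the section is automatically regular, and both the normal-bundle identification and the fundamental-class formula become the standard consequences recalled above. The only genuinely separate issue is ruling out the empty case (\ref{berti1}), which is a matter of positivity and dimension rather than of the Bertini machinery, and which I would dispatch with the incidence-variety count indicated in the first paragraph.
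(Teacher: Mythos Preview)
Your proposal is correct and follows essentially the same approach as the paper: apply Theorem \ref{berti} with $k=0$, $\sE = \sO_V$, $\sF = \bigoplus_{j=1}^m \sL_j$ to obtain smoothness and the expected codimension, and then invoke Fulton for the fundamental class (the paper cites \cite[Example 3.2.16]{fulto} rather than \S14). The paper's proof is terser---it does not separately discuss the normal bundle (treating it as standard for a complete intersection) or the emptiness alternative---but the substance is the same.
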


\begin{proof}
Applying Theorem \ref{berti} to the case $k = 0$, $\sE = \sO_V$ and $\sF = \bigoplus_{j = 1}^m \sL_j$, the zero locus $Z(\sigma)$ is smooth and has the expected codimension $m$ for a general $\sigma : \sE \rightarrow \sF$. Namely, there is a Zariski open subset $U$ in $H^0(V, \sF)$ such that every element $\sigma$ in $U$ defines a smooth complete intersection $X$ in $V$ of dimension $\dim V - m = d$. By \cite[Example 3.2.16]{fulto}, the fundamental class of a general member in $A_d(V)$ is $c_{m}(\sF) \cap [V]$.
\end{proof}

\begin{remark} \label{genermk}
Since $\bigoplus_{j = 1}^m H^0(V, \sL_j)$ is naturally isomorphic to $H^0(V, \sF)$ (as vector spaces), any element $\sigma$ in $U$ corresponds $(s_j)$ in $\bigoplus_{j = 1}^m H^0(V, \sL_j)$ and thus $X = Z(\sigma)$ is the complete intersection $\cap_{j = 1}^m Z(s_j)$.

Applying Theorem \ref{berti} repeatedly, we may assume that, for a general section $(s_j)$ in $H^0(V, \sF)$, the divisor $Z(s_j)$ is smooth with all subsets of the $Z(s_j)$'s meeting transversely. For example, by Theorem \ref{berti}, there is a Zariski open subset $V_j$ of $H^0(V, \sL_j)$ such that $Z(s_j)$ is smooth for $1 \leqslant j \leqslant m$. According to Lemma \ref{prozari}, it follows that $\prod_{j =1}^m V_j$ is Zariski open in $H^0(V, \sF)$. Replacing $U$ by $U \cap (\prod_{j =1}^m V_j)$, which is Zariski open in $H^0(V, \sF)$, all $Z(s_j)$'s are smooth for $(s_j) \in U$.
\end{remark}

In order to connect two configurations, we shall define a formal correspondence on configurations which is introduced in \cite{cdls}.

Let $P$ be a smooth projective variety, and let $p$ and $\pi$ be the projections from $\bP^n \times P$ onto $\bP^n$ and $P$ respectively. If $V = \bP^n \times P$ and
\begin{equation*}
\hsL_j =
\left\{
\begin{array}{ll}
p^{\ast}\sO_{\bP^n}(1) \otimes \pi^{\ast}\sL_j & \mbox{if } 1 \leqslant j \leqslant n +1, \\
\pi^{\ast}\sL_j  & \mbox{if } n + 2 \leqslant j \leqslant m,
\end{array}
\right.
\end{equation*}
where $\sL_j$'s are globally generated line bundles on $P$, we rewrite the configuration $\hsC \coloneqq [V \| \hsL_1, \cdots, \hsL_m]$ as
$$
\left[
\begin{array}{c||cccccc}
n & 1 & \cdots & 1 & 0 & \cdots & 0 \\
P & \sL_1 & \cdots & \sL_{n + 1} & \sL_{n + 2} & \cdots & \sL_m
\end{array}
\right].
$$
We introduce a new configuration
$$
\sC \coloneqq
\left[
\begin{array}{c||ccccc}
P & \bigotimes_{i = 1}^{n + 1} \sL_i & \sL_{n + 2} & \cdots & \sL_m
\end{array}
\right]
$$
so as to remove the $\bP^n$ factor and denote the \emph{formal correspondence} by
\begin{equation}\label{fsc}
\hsC \mathrel{\leftarrow\mkern-14mu\leftarrow}\!\rightarrow \sC.
\end{equation}
Remark that the paper \cite{cdls} refers to the correspondence of passing from the right hand side to the left as \emph{splitting} and the reverse process as \emph{contraction}.

We are going to compute the difference of topological Euler numbers of smooth members under the formal correspondence of $3$-dimensional configurations.

\begin{proposition} \label{eulchern0}
Let $S$ be a smooth variety of dimension $4$,
$$
\mathscr{R} =
\left[
\begin{array}{c||ccc}
n & 1 & \cdots & 1  \\
S & \sL_1 & \cdots & \sL_{n + 1}
\end{array}
\right]
$$
a configuration of dimension 3 and $\sE = \bigoplus_{i = 1}^{n + 1} \sL_i$ a vector bundle of rank $n + 1$. Assume that $\x \in \mathscr{R}$ and $\tX \in
\left[
\begin{array}{c||c}
S & \bigotimes_{i = 1}^{n + 1} \sL_i
\end{array}
\right]$ are smooth members. Then we have
$$
e(\x) - e(\tX) = 2 \int_S \left(c_2(\sE)^2 - c_1(\sE) c_3(\sE)\right)
$$
where $e(-)$ denotes the topological Euler number.
\end{proposition}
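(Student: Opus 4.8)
The plan is to compute each Euler number as a Chern number via adjunction and then subtract. By Proposition \ref{gene1}, a smooth member $\tX$ is a divisor in $S$ with fundamental class $c_1(\sE)$ (since $\bigotimes_i \sL_i = \det\sE$), while a smooth member $\x$ is a codimension-$(n+1)$ complete intersection in $W \coloneqq \bP^n \times S$ with normal bundle $\widehat{\sE} \coloneqq p^\ast \sO_{\bP^n}(1) \otimes \pi^\ast \sE$ and fundamental class $c_{n+1}(\widehat{\sE})$. In particular these data, hence the Euler numbers $e(-) = \int c_3(T_{-})$, depend only on $\sE$ and $S$ and not on the chosen smooth members.

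Set $\ell \coloneqq c_1(\sE)$ and let $b_1, \dots, b_{n+1}$ be the Chern roots of $\sE$. For $\tX$, adjunction gives $c(T_{\tX}) = c(T_S)|_{\tX}/(1+\ell)$, so by the projection formula
$$
e(\tX) = \int_S \frac{c(T_S)}{1+\ell}\,\ell,
$$
the degree-$4$ component being understood before integration. For $\x$, the product $W$ has $c(T_W) = (1+h)^{n+1}\pi^\ast c(T_S)$ with $h \coloneqq p^\ast c_1(\sO_{\bP^n}(1))$, and the roots of $\widehat{\sE}$ are $h + b_j$; adjunction and the projection formula then give
$$
e(\x) = \int_W \frac{(1+h)^{n+1} c(T_S)}{\prod_j (1+h+b_j)} \prod_j (h+b_j).
$$
Pushing forward along the $\bP^n$-factor extracts the coefficient of $h^n$, so with $Q \coloneqq \left[h^n\right]\dfrac{(1+h)^{n+1}\prod_j(h+b_j)}{\prod_j(1+h+b_j)}$, a symmetric function of the $b_j$, we obtain $e(\x) = \int_S c(T_S)\,Q$ (degree-$4$ part).

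Thus $e(\x) - e(\tX) = \int_S c(T_S)\big(Q - \tfrac{\ell}{1+\ell}\big)$, the degree-$4$ part being taken. I would first expand $Q$ in ascending degree in the $b_j$ and verify that its components of degree $\leqslant 3$ agree with those of $\ell/(1+\ell)$, namely $0, \ell, -\ell^2, \ell^3$; for instance the degree-$0$ part is $\left[h^n\right]h^{n+1} = 0$, and a first-order expansion yields degree-$1$ part $\ell$. Granting this agreement, every term containing $c_i(T_S)$ with $i \geqslant 1$ cancels in the difference and only the $c_0(T_S) = 1$ contribution survives:
$$
e(\x) - e(\tX) = \int_S \big([Q]_4 + \ell^4\big).
$$
It then remains to prove the single identity $[Q]_4 + \ell^4 = 2\big(c_2(\sE)^2 - c_1(\sE)c_3(\sE)\big)$.

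The main obstacle is this last degree-$4$ computation: evaluating the order-four part of the symmetric function $Q$ and re-expressing the resulting power sums in terms of the elementary symmetric functions $c_i(\sE)$. I expect to organize it through the residue theorem on $\bP^1_h$ applied to $\Psi(h)/h^{n+1}\,dh$, where $\Psi(h) = (1+h)^{n+1}\prod_j(h+b_j)/\prod_j(1+h+b_j)$: a short logarithmic-derivative computation shows the residue at $h = \infty$ vanishes, so $Q = \mathrm{Res}_{h=0} = -\sum_j \mathrm{Res}_{h=-1-b_j}$, which presents $Q$ as a Lagrange-interpolation sum in the $b_j$ amenable to closed-form evaluation. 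As a consistency check, the predicted answer contains no $c_4(\sE)$, compatibly with the formula having to hold for every $n$ (so in particular when $\rk\sE = n+1 \leqslant 3$); and for $n = 1$, $S = \bP^4$, $\sE = \sO(4) \oplus \sO(1)$ it gives $2\int_{\bP^4} c_2(\sE)^2 = 2 \cdot 16 = 32$, matching the $16$ ordinary double points of the quintic discussed in the introduction.
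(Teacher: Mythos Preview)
Your plan is correct and follows essentially the same route as the paper: compute $e(\tX)$ and $e(\x)$ as Chern numbers via adjunction, push forward along the $\bP^n$-factor by extracting the coefficient of $h^n$, and reduce everything to a degree-$4$ symmetric-function identity on $S$. The only differences are bookkeeping: the paper works with Segre classes $s_i(\sE)$ and the recursion $c_l(\sE)=-\sum_i s_i(\sE)c_{l-i}(\sE)$ instead of Chern roots, and it tracks the $c_i(T_S)$-terms explicitly (showing they cancel) rather than phrasing this cancellation as your cleaner observation $[Q]_{\leqslant 3}=[\ell/(1+\ell)]_{\leqslant 3}$; your proposed residue computation for $[Q]_4$ is a legitimate alternative to the paper's direct expansion, and your check that $\mathrm{Res}_{h=\infty}=0$ goes through since the logarithmic derivative of $(1+w)^{n+1}\prod_j(1+b_jw)/\prod_j(1+(1+b_j)w)$ at $w=0$ vanishes.
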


\begin{proof}
Let $\iota : \tX \hookrightarrow S$ be the inclusion. Since $S$ and $\tX$ are smooth, we have the normal exact sequence
$$
0 \rightarrow T_{\tX} \rightarrow T_S |_{\tX} \rightarrow N_{\tX} \rightarrow 0.
$$
By Proposition \ref{gene1}, the normal bundle of the hypersurface $\tX$ in $S$ is $\otimes_{i = 1}^{n + 1} \sL_i$. Let
\begin{equation}\label{poft}
p(t) \coloneqq \iota_{\ast} c_t(T_{\tX}) =c_t(T_S) s_t(\otimes_{i = 1}^{n + 1} \sL_i),
\end{equation}
where $c_t(\mathscr{V})$ is the Chern polynomial of a vector bundle $\mathscr{V}$ and $c_t(\mathscr{V})s_t(\mathscr{V}) = 1$. Observe that $\otimes_{j = 1}^{n + 1} \sL_j$ and $\sE$ have the same first Chern class $\sum_{j = 1}^{n + 1} c_1(\sL_j)$. Then the fundamental class of $\tX$ in $A_3(S)$ is
\begin{equation}\label{fundcyc}
c_1(\otimes_{j = 1}^{n + 1} \sL_j) \cap [S] = c_1(\sE) \cap [S].
\end{equation}

We are going to calculate  $e(\tX)$. According to that $s_t(\otimes_{j = 1}^{n + 1} \sL_j)$ is the inverse of the Chern polynomial $c_t(\otimes_{j = 1}^{n + 1} \sL_j) = 1 + c_1(\otimes_{j = 1}^{n + 1} \sL_j)$, it follows that
\begin{equation} \label{segpol}
s_t(\otimes_{j = 1}^{n + 1} \sL_j) = \sum_{i = 0}^{\infty} (-c_1(\sE))^i t^i =\sum_{i = 0}^{\infty} s_1(\sE)^i t^i.
\end{equation}
Set $c_t(S) = c_t(T_S)$. Using (\ref{segpol}) and collecting the coefficient of $t^3$ in (\ref{poft}), we get
\begin{equation} \label{p3}
\frac{1}{3 !}p'''(0) = s_1(\sE)^3 + c_1(S) s_1(\sE)^2 + C_s,
\end{equation}
where
\begin{equation} \label{ccher}
C_s \coloneqq c_2(S) s_1(\sE) + c_3(S).
\end{equation}
By (\ref{fundcyc}), (\ref{p3}), and the Gauss-Bonnet theorem, we get
$$
e(\tX) = \int_{\tX} c_3(\tX) = \int_S \frac{1}{3 !} p'''(0) c_1(\sE).
$$

Let $pr$ be the projection from $\bP^n \times S$ onto $\bP^n$ and $\hiota$ the inclusion from $\x$ into $\bP^n \times S$. To compute $e(\x)$, we identify the line bundle $\sO_{\bP^n}(1)$ on $\bP^n$ with $pr^{\ast} \sO_{\bP^n}(1)$ on $\bP^n \times S$ (similarly for vector bundles on $S$).

According to $N_{\x} = \left(\sE \otimes \sO_{\bP^n}(1)\right) |_{\tX}$, it follows that
$$
q(t) \coloneqq \hiota_{\ast} c_t(T_{\x}) = c_t(T_{\bP^n} \oplus T_S) s_t(\sE \otimes \sO_{\bP^n}(1)).
$$
By Proposition \ref{gene1}, the fundamental class of $\x$ in $A_3(\bP^n \times S)$ is
$$
c_{n + 1}(\sE \otimes \sO_{\bP^n}(1)) \cap [\bP^n \times S].
$$
Set $H = c_1(\sO_{\bP^n}(1))$ in $A_1(\bP^n \times S)$. According to $c_t(T_{\bP^n} \oplus T_S) = c_t(T_{\bP^n}) c_t(T_S)$, it follows that the coefficient of $t^3$ in $q(t)$ is
\begin{equation} \label{coefqt}
\frac{1}{3 !} q'''(0) = \sum_{p = 0}^3 \binom{n + 1}{p} \left[ \sum_{i + j = 3 - p} c_i(S) s_j(\sE \otimes \sO(1)) \right] H^p.
\end{equation}
From \cite[Example 3.1.1]{fulto}, we have
\begin{equation} \label{segtensor}
s_l(\sE \otimes \sO(1)) = \sum_{i = 0}^l (- 1)^{l - i} \binom{n + l}{n + i} s_i(\sE) H^{l - i}.
\end{equation}
By substituting (\ref{segtensor}) into (\ref{coefqt}), we obtain
\begin{equation} \label{coefqt2}
\frac{1}{3 !} q'''(0) = s_1(\sE) H^2 - \left[ 2 s_2(\sE) + c_1(S) s_1(\sE)\right] H + \left[s_3(\sE) + c_1(S) s_2(\sE) + C_s \right],
\end{equation}
where $C_s$ is the class as defined in (\ref{ccher}). (For example, the coefficient of $H^3$ in (\ref{coefqt2}) is
$
\binom{n + 1}{3} - (n + 1) \binom{n + 1}{2} + (n + 1) \binom{n + 2}{n} - \binom{n + 3}{n}
$
which is equal to zero.)

We regard the class $\frac{1}{3 !} q'''(0) c_{n + 1}(\sE \otimes \sO_{\bP^n}(1))$ as a polynomial in $H$, denoted it by $Q(H)$. Then
$$
e(\x) = \int_{\bP^n \times S} Q(H) = \int_S \frac{1}{n !} Q^{(n)}(0).
$$
If we can prove
\begin{equation} \label{qmp}
\frac{1}{n !} Q^{(n)}(0) - \frac{1}{3 !} p'''(0) c_1(\sE) = 2 \left(c_2(\sE)^2 - c_1(\sE) c_3(\sE)\right),
\end{equation}
then the Proposition follows by integrating the equality (\ref{qmp}) over $S$.

Note that the top Chern class of $\sE \otimes \sO_{\bP^n}(1)$ is $\sum_{i = 0}^{n + 1} c_{n + 1 - i}(\sE) H^i$. According to (\ref{coefqt2}), it follows that the coefficient $\frac{1}{n !} Q^{(n)}(0)$ of $H^n$ in $Q(H)$ is equal to the coefficient of $H^n$ in the class
\begin{equation*}
\frac{1}{3 !} q'''(0) \left(c_1(\sE) H^n + c_2(\sE) H^{n - 1} + c_3(\sE) H^{n - 2}\right).
\end{equation*}
From  $H^{n + 1} = 0$ in $A_{\ast}(\bP^n)$, it follows that
\begin{multline*}
\frac{1}{n !} Q^{(n)}(0) = s_1(\sE) c_3(\sE) - [ 2 s_2(\sE)  + c_1(S) s_1(\sE) ] c_2(\sE) \\
+ \left[s_3(\sE) + c_1(S) s_2(\sE) + C_s \right] c_1(\sE).
\end{multline*}
Rewriting Segre classes $s_1(\sE)^i$ in $1/ 3 ! p'''(0) c_1(\sE)$ and $s_i(\sE)$ in terms of Chern classes $c_i(\sE)$ by using the recurrence relations $c_l(\sE) = - \sum_{i =1}^{l} s_i(\sE) c_{n - i}(\sE)$, we obtain that the left-hand side of (\ref{qmp}) equals to
$$
2 [ s_3 (\sE) c_1(\sE) - s_2(\sE) c_2(\sE) + s_2(\sE) c_1(\sE)^2],
$$
which is nothing but the right-hand side of (\ref{qmp}).
\end{proof}

\begin{corollary} \label{eulchern}
Let $\hsC \mathrel{\leftarrow\mkern-14mu\leftarrow}\!\rightarrow \sC$ be as in (\ref{fsc}), and let $\sE = \bigoplus_{i = 1}^{n + 1} \sL_i$ and $\sF = \bigoplus_{i = n + 2}^{m} \sL_i$ be vector bundles of rank $n + 1$ and $m - n - 1$ respectively. Assume that $\hsC$ and $\sC$ are configurations of dimension 3. Given smooth members $\x \in \hsC$ and $\tX \in \sC$, we assume that there is a global section $\sigma$ of $\sF$ such that the zero locus $Z(\sigma)$ is smooth of dimension $4$ and contains $\tX$. Then
$$
e(\x) - e(\tX) = 2 \int_P \left(c_2(\sE)^2 - c_1(\sE) c_3(\sE)\right) c_{m - n -1}(\sF).
$$
\end{corollary}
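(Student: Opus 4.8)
The plan is to reduce everything to Proposition~\ref{eulchern0} applied to the smooth $4$-fold $S \coloneqq Z(\sigma)$, and then transfer the resulting Chern-number integral from $S$ down to $P$ via the projection formula. Since $\sigma$ is a global section of $\sF = \bigoplus_{i = n + 2}^{m} \sL_i$, cutting $P$ by $\sigma$ imposes exactly the last $m - n - 1$ defining equations shared by the members of $\sC$ and (after pullback along $\pi$) of $\hsC$. The $4$-fold $S$ thus carries the globally generated bundles $\sL_i |_S$, and the two remaining configurations live naturally over it: the given $\tX \subseteq S$ is cut out of $S$ by a section of $(\bigotimes_{i = 1}^{n + 1} \sL_i)|_S$, so $\tX \in [\, S \,\|\, (\bigotimes_{i = 1}^{n + 1} \sL_i)|_S \,]$, while any member of $\hsC$ supported over $\bP^n \times S$ is cut out of $\bP^n \times S$ by sections of $\hsL_1|_{\bP^n \times S}, \dots, \hsL_{n + 1}|_{\bP^n \times S}$ and hence lies in the configuration $\mathscr{R}$ of Proposition~\ref{eulchern0}, with base $S$ and bundle $\sE|_S$.

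First I would check that it suffices to compute $e(\x) - e(\tX)$ for such conveniently chosen members. By Proposition~\ref{param} the smooth members of each of $\hsC$ and $\sC$ are the fibers of a flat projective family over a nonempty Zariski-open, hence irreducible and connected, subset of a vector space; smooth fibers of such a family are diffeomorphic, so $e(\x)$ and $e(\tX)$ depend only on the configuration and not on the particular smooth member. Using the given $\sigma$ (which makes $S$ smooth of dimension $4$) together with Bertini on the smooth varieties $\bP^n \times S$ and $S$ (Proposition~\ref{gene1} and Remark~\ref{genermk}), I may therefore replace $\x$ and $\tX$ by smooth members supported over $\bP^n \times S$ and $S$, respectively, without altering either Euler number.

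With these choices Proposition~\ref{eulchern0} applies verbatim, its ambient $4$-fold being the present $S$ and its bundle being $\sE|_S = \bigoplus_{i = 1}^{n + 1} \sL_i|_S$, giving
\begin{equation*}
e(\x) - e(\tX) = 2 \int_S \left( c_2(\sE|_S)^2 - c_1(\sE|_S)\, c_3(\sE|_S) \right).
\end{equation*}
To finish I would push this down to $P$. Writing $\iota : S \hookrightarrow P$ for the inclusion, Proposition~\ref{gene1} gives $\iota_{\ast}[S] = c_{m - n - 1}(\sF) \cap [P]$, while $\iota^{\ast} c_i(\sE) = c_i(\sE|_S)$; hence the projection formula yields, for $\alpha = c_2(\sE)^2 - c_1(\sE)\, c_3(\sE)$,
\begin{equation*}
\int_S \iota^{\ast} \alpha = \int_P \alpha \cdot c_{m - n - 1}(\sF),
\end{equation*}
and substituting this into the previous display produces the claimed formula.

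The Chern-class bookkeeping here is immediate once Proposition~\ref{eulchern0} is available, so the real content lies in the reduction step: one must ensure that smooth members of $\hsC$ and $\sC$ can genuinely be arranged over one and the same $4$-fold $S = Z(\sigma)$ while still realizing the generic Euler numbers. This is where the hypotheses are used in full --- the deformation-invariance of $e(-)$ over the connected parameter space of smooth members lets us move the members freely, and the assumption that $Z(\sigma)$ is smooth of dimension $4$ and contains $\tX$ is exactly what guarantees, via Bertini, that a generic choice of the remaining sections keeps both $\x \subset \bP^n \times S$ and $\tX \subset S$ smooth.
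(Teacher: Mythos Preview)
Your argument is correct and follows the same route as the paper: set $S = Z(\sigma)$, apply Proposition~\ref{eulchern0} on $S$, then push the integral down to $P$ using $[S] = c_{m-n-1}(\sF)\cap[P]$ and the projection formula. The paper compresses all of this into a single sentence, tacitly using that the Euler-number computations in Proposition~\ref{eulchern0} depend only on Chern data and hence on the configuration; your explicit appeal to deformation invariance over the connected parameter space (Proposition~\ref{param}) makes that step visible. One small redundancy: by hypothesis $\tX$ already lies in $S$, so only $\x$ needs to be moved to sit over $\bP^n\times S$.
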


\begin{proof}
Let $S$ be the zero locus $Z(\sigma)$. The corollary follows immediately from Proposition \ref{eulchern0} and the fundamental class of $S$ in $A_4(P)$ is $c_{m - n - 1}(\sF) \cap [P]$.
\end{proof}

\begin{example}
Consider
$$
\widehat{\mathscr{C}} \coloneqq
\left[
\begin{array}{c||ccc}
2 & 1 & 1 & 1 \\
3 & 1 & 1 & 2 \\
1 & 0 & 0 & 2
\end{array}
\right]
\mathrel{\leftarrow\mkern-14mu\leftarrow}\!\rightarrow
\mathscr{C} \coloneqq
\left[
\begin{array}{c||c}
3 & 4 \\
1 & 2
\end{array}
\right].
$$
For smooth member $\x \in \widehat{\mathscr{C}}$ and $\tX \in \mathscr{C}$, the Euler numbers $e(\x)$ and $e(\tX)$ are $-112$ and $-168$ respectively. Let $s$ (resp. $t$) be the class of a hyperplane on $\bP^3$ (resp. $\bP^1$), and let $\mathscr{E}$ be the vector bundle $\sO(1,0) \bigoplus \sO(1,0) \bigoplus \sO(2,2)$ of rank 3 on $\bP^3 \times \bP^1$. Then the Chern classes of $\mathscr{E}$ are
\begin{center}
    $c_1(\mathscr{E}) = 4 s + 2 t$, $c_2(\mathscr{E}) = 5 s^2 +4 s t$, $c_3(\mathscr{E}) = 2 s^3 + 2 s^2 t$,
\end{center}
and the coefficient of $s^3 t$ in $c_2(\mathscr{E})^2 - c_1(\mathscr{E}) c_3(\mathscr{E})$ is $28$.
\end{example}

\section{Calabi--Yau Configurations} \label{cyc}

From now on we will suppose that all configurations are of dimension 3.

\begin{definition}
A configuration matrix $[\mathbf{n} \| \mathbf{q}]$ is called a complete intersection Calabi--Yau (CICY) configuration if it satisfy the Calabi--Yau condition
$$
\sum\nolimits_{j = 1}^m q^i_j = n_i + 1
$$
for all $1 \leqslant i \leqslant k$.
\end{definition}

It is easy to see that CICY configuration matrices are preserved under formal correspondences (\ref{fsc}). Note that the topological Euler number of a smooth member which belongs to a CICY configuration matrix is non-positive \cite[(2.28)]{cdls}.

\begin{remark}\label{ndiag}
We do not allow that a Calabi--Yau 3-fold $X$ is a product of three elliptic curves or of an elliptic curve and $K3$ surface since $H^1(\sO_X) = 0$. Further we are not interested in a configuration matrix which contains the sub-configuration $[1 \| 2]$ because the sub-configuration describes two points (counted with multiplicity) in $\bP^1$. To exclude such cases, we only treat \emph{non block-diagonal} CICY configuration matrices.
\end{remark}

Let us consider the simple case for all $n_i = 1$ and $q^i_j = 0$ or $2$.

\begin{example}\label{simple}
Given a CICY configuration $k \times (m + 1)$-matrix $[\mathbf{n} \| \mathbf{q}]$ with $n_i = 1$ and $q^i_j = 0$ or $2$ for all $i, j$, we have $k = m+ 3$. By Remark \ref{ndiag}, we know that $[\mathbf{n} \| \mathbf{q}]$ is non block-diagonal and thus
$$
\sum\nolimits_{i = 1}^k q^i_j \geqslant 4
$$
for each column of $\mathbf{q}$. According to the Calabi--Yau condition, it follows that
$$
4(k - 3) \leqslant \sum\nolimits_{i,j} q^i_j = 2 k
$$
and therefore $4 \leqslant k \leqslant 6$. When $k$ equals $5$ or $6$, we get a product of an elliptic curve and $K3$ surface or of three elliptic curves respectively. By Remark \ref{ndiag}, the CICY configuration matrix must be
$$
\left[
\begin{array}{c||c}
1 & 2 \\
1 & 2 \\
1 & 2 \\
1 & 2
\end{array}
\right]
$$
in this simple case. We denote this configuration matrix by $\mathscr{C}_{1111}$.
\end{example}

We say that a configuration connects to another formally if, after finite formal correspondences (\ref{fsc}), one represents the same configuration as the another one. The following proposition were proved in \cite[Lemma 2]{gh2}, for the convenience of the readers we recall the proof here.

\begin{proposition} [\cite{gh2}] \label{fconn}
Every CICY configuration matrices can be connected formally.
\end{proposition}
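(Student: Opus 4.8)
The plan is to prove the stronger assertion that every non block-diagonal CICY configuration matrix is formally connected to the single reference configuration $\mathscr{C}_{1111}$ of Example \ref{simple}; since the formal correspondence (\ref{fsc}) is symmetric, transitivity then yields that any two CICY configurations are connected. I would organize the reduction into two phases, each controlled by a numerical quantity that strictly decreases.

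In the first phase I reduce to the case in which every factor is a $\bP^1$, inducting on the number of rows with $n_i \geqslant 2$. Fix such a factor $\bP^{n_i}$; by the Calabi--Yau condition its row sums to $n_i + 1$. Whenever an entry $q^i_j \geqslant 2$ occurs I peel off a degree-one piece: writing the line bundle of that column as $\mathscr{L} \otimes pr_i^{\ast}\sO_{\bP^{n_i}}(1)$ and applying a splitting (\ref{fsc}) introduces a new $\bP^1$ and replaces the column by two columns whose $i$-th entries are $q^i_j - 1$ and $1$. Iterating, the $i$-th row becomes a string of $1$'s and $0$'s with exactly $n_i + 1$ ones, so the contraction of (\ref{fsc}) now applies to $\bP^{n_i}$ and deletes it, the only new factors created throughout being $\bP^1$'s. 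Hence the number of rows with $n_i \geqslant 2$ drops by one and the induction proceeds.

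In the second phase every row sums to $2$, so each row consists either of a single $2$ or of a pair of $1$'s, and $k = m + 3$. Here I induct on the number $m$ of columns. If every row is a single $2$, the matrix has entries in $\{0, 2\}$ only, and Example \ref{simple} together with the non block-diagonal hypothesis of Remark \ref{ndiag} forces it to be $\mathscr{C}_{1111}$. Otherwise some row is a pair of $1$'s and is therefore contractible; its contraction merges the two columns into one, lowering $m$ by one while keeping all factors equal to $\bP^1$. Iterating reaches $m = 1$, namely the single column of multidegree $(2,2,2,2)$ on $(\bP^1)^4$, which is exactly $\mathscr{C}_{1111}$.

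The hard part is the bookkeeping that makes the two inductions legitimate. Both contractions can raise entries in the surviving rows, since a merged column may acquire new entries $\geqslant 2$; one must check that this never increases the chosen complexity --- the count of rows with $n_i \geqslant 2$ in the first phase and the number $m$ of columns in the second --- which it does not, because a contraction never raises the dimension of a surviving factor. The genuinely delicate point is to verify that the non block-diagonal condition of Remark \ref{ndiag} is preserved at every step, so that no degenerate product (such as an elliptic curve times a $K3$ surface, or the excluded block $[1 \| 2]$) is ever produced and the final identification with $\mathscr{C}_{1111}$ through Example \ref{simple} remains valid.
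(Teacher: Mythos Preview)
Your proposal is correct and follows essentially the same two-phase splitting-then-contraction strategy as the paper's proof, reducing every non block-diagonal CICY configuration to $\mathscr{C}_{1111}$; the paper's argument is simply a terser version of yours without the explicit complexity measures for induction. The paper merely asserts that non block-diagonality is preserved under formal correspondences, while you correctly isolate this as the delicate bookkeeping point.
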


\begin{proof}
Given a (non block-diagonal) CICY configuration matrix $[\mathbf{n} \| \mathbf{q}]$ as in (\ref{conmat}). We perform formal correspondences iteratively until we arrive at a configuration matrix for which each row entries $q^i_j$ with $n_i > 1$ are $0$ or $1$ (for example, introducing a sub-configuration matrix $[1 \| 1 1]$ to split it). Perform next formal correspondences in a way that finally leaves each $n_i = 1$ and $q^i_j = 0$ or $2$. Notice that non block-diagonal CICY configuration matrices are preserved under formal correspondences. According to Example \ref{simple}, it follows that the configuration matrix is the simple configuration $\mathscr{C}_{1111}$.
\end{proof}

\begin{remark} \label{fconnrmk}
To illustrate Proposition \ref{fconn}, we give formal correspondences connecting the configuration of quintic hypersurfaces in $\bP^4$ to $\mathscr{C}_{1111}$:
\begin{align*}{\small
\left[
\begin{array}{c||c}
4 & 5
\end{array}
\right]
\mathrel{\leftarrow\mkern-14mu\leftarrow}\!\rightarrow
\left[
\begin{array}{c||cc}
4 & 4 & 1 \\
1 & 1 & 1 \\
\end{array}
\right]
\mathrel{\leftarrow\mkern-14mu\leftarrow}\!\rightarrow
\left[
\begin{array}{c||ccc}
4 & 3 & 1 & 1 \\
1 & 1 & 1 & 0 \\
1 & 1 & 0 & 1 \\
\end{array}
\right]
\mathrel{\leftarrow\mkern-14mu\leftarrow}\!\rightarrow
\left[
\begin{array}{c||cccc}
4 & 2 & 1 & 1 & 1 \\
1 & 1 & 1 & 0 & 0 \\
1 & 1 & 0 & 1 & 0 \\
1 & 1 & 0 & 0 & 1 \\
\end{array}
\right]
}
\end{align*}
\begin{align*} {\small
\mathrel{\leftarrow\mkern-14mu\leftarrow}\!\rightarrow
\left[
\begin{array}{c||ccccc}
4 & 1 & 1 & 1 & 1 & 1 \\
1 & 1 & 1 & 0 & 0 & 0 \\
1 & 1 & 0 & 1 & 0 & 0 \\
1 & 1 & 0 & 0 & 1 & 0 \\
1 & 1 & 0 & 0 & 0 & 1\\
\end{array}
\right]
\mathrel{\leftarrow\mkern-14mu\leftarrow}\!\rightarrow
\left[
\begin{array}{c||c}
1 & 2 \\
1 & 2 \\
1 & 2 \\
1 & 2
\end{array}
\right].
}
\end{align*}
\end{remark}

The following proposition is an application of Theorem \ref{berti}, which is a well known result in \cite{gh1}. The remaining task is to prove that a general CICY member is irreducible and $H^1(\sO) = 0$ by using a suitable Lefschetz-type theorem for an ample reducible divisor.

\begin{proposition} \label{gene2}
A general member of a CICY configuration matrix is a smooth Calabi--Yau 3-fold.
\end{proposition}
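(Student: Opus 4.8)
The plan is to check the three defining properties of a smooth Calabi--Yau $3$-fold separately: smoothness with $\dim X = 3$, triviality of the canonical class, and the vanishing $H^1(\sO_X) = 0$ together with irreducibility. The first two drop out of the machinery already in place. Applying Proposition \ref{gene1} with $\sE = \sO_V$ and $\sF = \bigoplus_{j = 1}^m \sL_j$ furnishes a Zariski open $U \subseteq H^0(V, \sF)$ for which $X = Z(\sigma)$ is smooth of dimension $\dim V - m = 3$ for every $\sigma \in U$, with normal bundle $N_{X/V} = \bigoplus_j \sL_j|_X$. Adjunction gives $\omega_X = (\omega_V \otimes \det N_{X/V})|_X$; on $V = \prod_i \bP^{n_i}$ the canonical bundle $\omega_V$ has multidegree $(-(n_1 + 1), \dots, -(n_k + 1))$ while $\det N_{X/V}$ has multidegree $(\sum_j q^1_j, \dots, \sum_j q^k_j)$, and the Calabi--Yau condition $\sum_j q^i_j = n_i + 1$ makes these cancel. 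Hence $\omega_X \cong \sO_X$ and $K_X \sim 0$.

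The substance of the proof is the vanishing $H^1(\sO_X) = 0$ and the irreducibility of $X$. Since $X$ is smooth projective, hence K\"ahler, Hodge theory identifies $H^1(\sO_X)$ with $H^{0,1}(X)$ and gives $h^{0,1} = h^{1,0}$, so $H^1(\sO_X) = 0$ is equivalent to the topological statement $H^1(X, \bC) = 0$. Likewise $X$ is reduced, so irreducibility amounts to connectedness, i.e.\ to $H^0(X, \bC) = \bC$. Both assertions I would extract from a Lefschetz comparison with the ambient $V$, which by the K\"unneth formula is simply connected with $H^1(V, \bC) = 0$.

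The first attempt is the iterated hyperplane theorem. Shrinking $U$ as in Remark \ref{genermk}, I would choose general $D_j = Z(s_j) \in |\sL_j|$ so that the partial intersections $V = V_0 \supseteq V_1 \supseteq \cdots \supseteq V_m = X$, with $V_j = V_{j - 1} \cap D_j$, are all smooth of dimension $\geq 3$. If each $\sL_j$ were ample, the classical Lefschetz theorem applied step by step would give isomorphisms $H^i(V_{j - 1}, \bC) \xrightarrow{\sim} H^i(V_j, \bC)$ in degrees $i \leq 1$, transporting $H^0(V) = \bC$ and $H^1(V) = 0$ all the way to $X$ and finishing the argument.

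The main obstacle is that the $\sL_j$ are only globally generated: a column of the configuration matrix may contain zeros, so $\sL_j$ contracts the corresponding $\bP^{n_\ell}$ factors and neither the classical Lefschetz theorem nor its Goresky--MacPherson refinement with a defect is guaranteed to reach degree $1$. (Indeed, for general nef cuts a positive-dimensional complete intersection can even be disconnected, e.g.\ $Z(\sigma)$ for $\sigma \in H^0(\bP^1 \times \bP^2, \sO(2,0))$, so no purely local fix is possible.) To overcome this I would exploit that the Calabi--Yau condition forces the \emph{total} bundle $\bigotimes_{j = 1}^m \sL_j$ to have multidegree $(n_1 + 1, \dots, n_k + 1)$, all entries $\geq 2$, and hence to be very ample on $V$. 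Thus the reduced divisor $D = D_1 \cup \cdots \cup D_m$ is a (reducible) member of the ample system $\left| \bigotimes_j \sL_j \right|$, and the Lefschetz theorem for an ample divisor --- which holds irrespective of reducibility, since $V \setminus D$ is affine --- yields $H^0(D, \bC) = \bC$ and $H^1(D, \bC) = 0$ because $\dim D = \dim V - 1 \geq 3$. The remaining and genuinely delicate task is to transfer this information from the ample reducible divisor $D$ down to its deepest stratum $X = D_1 \cap \cdots \cap D_m$. One natural route is the Mayer--Vietoris spectral sequence
\[
E_1^{p, q} = \bigoplus_{|I| = p + 1} H^q(D_I, \bC) \Longrightarrow H^{p + q}(D, \bC), \qquad D_I = \bigcap_{j \in I} D_j,
\]
in which $X$ occurs as the term indexed by $I = \{1, \dots, m\}$; since the intermediate strata $D_I$ need not be connected, the global ampleness produced by the Calabi--Yau condition must enter essentially, and making this bookkeeping precise --- equivalently, identifying the exact Lefschetz-type statement for the ample reducible divisor $D$ that outputs $H^1(X, \bC) = 0$ and the connectedness of $X$ --- is the one step I expect to require real care; everything else is formal.
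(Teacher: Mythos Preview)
Your outline matches the paper's proof essentially step for step: smoothness via Proposition~\ref{gene1}, $K_X \sim 0$ via adjunction and the Calabi--Yau condition, and then the reduction to $\dim H^0(X,\bC)=1$ and $H^1(X,\bC)=0$ using Lefschetz for the \emph{ample reducible} divisor $D=\sum_j D_j$ together with Mayer--Vietoris. The paper completes your last step exactly along the lines you anticipate: the Mayer--Vietoris complex combined with Lefschetz for the ample $D$ gives, for $i=0,1$ (where $i+m<\dim V$), an exact sequence
\[
0 \to H^i(V,\bC) \to \bigoplus_{|J|=1} H^i(D_J,\bC) \to \cdots \to \bigoplus_{|J|=m-1} H^i(D_J,\bC) \to H^i(X,\bC) \to 0,
\]
quoted from \cite{chs}. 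The one point where you are too pessimistic is the connectedness of the intermediate strata: in the CICY setting they \emph{are} all connected with $H^1=0$. The mechanism is an induction. For any $J$, let $L=\{i:q^i_j=0 \text{ for all } j\in J\}$; then $D_J=D_J'\times\prod_{l\in L}\bP^{n_l}$ where $D_J'$ is a complete intersection in the smaller ambient $V'=\prod_{i\notin L}\bP^{n_i}$, and on $V'$ the divisor $\sum_{j\in J}D_j'$ is again ample. Thus the same exact sequence applies to $D_J'$, and inductively one obtains $\dim\bigoplus_{|J|=r}H^0(D_J,\bC)=\binom{m}{r}$ and $\bigoplus_{|J|=r}H^1(D_J,\bC)=0$ for $r<m$; the alternating sum then forces $\dim H^0(X,\bC)=1$ and $\dim H^1(X,\bC)=0$. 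So the ``real care'' you flag is precisely this inductive product decomposition, which is where the Calabi--Yau (and non-block-diagonal) hypotheses enter.
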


\begin{proof}
Let $V = \prod_{i = 1}^k \bP^{n_i}$ and let $[\mathbf{n} \| \mathbf{q}]$ be a (non block-diagonal) CICY configuration matrix. Let $\sL_j$ be the line bundle with the multidegree $(q^1_j, \cdots, q^k_j)$. Note that the canonical bundle of $X$ is trivial by the adjunction formula. By Proposition \ref{gene1}, it suffices to prove that a general smooth member $X \in [\mathbf{n} \| \mathbf{q}]$ is connected and $H^1(\sO_X) = 0$.  Namely, we only need to prove that $H^0(X, \bC)$ and $H^1(X, \bC)$ have dimension one and zero respectively.

Pick a general section $(s_j)$ in $H^0(V, \bigoplus_{j = 1}^m \sL_j)$ for which the divisor $D_j \coloneqq Z(s_j)$ is a smooth with all subsets of the $D_j$'s meeting transversely (cf. Remark \ref{genermk}). We notice that if all $q^{i_s}_{j} = 0$ for some $i_s$ then $D_J$ is of the form $D_{J}' \times \bP^{n_{i_s}}$ where $D_{J}'$ is a complete intersection in $\prod_{i \neq i_s} \bP^{n_i}$.  In particular, $H^0(D_j, \bC) $ and $H^1(D_j, \bC) $ are one and zero respectively, by Lefschetz hyperplane theorem, and thus $D_j$ is irreducible for all $1 \leqslant j \leqslant m$.

Using the mixed Hodge theory and Lefschetz hyperplane theorem on the ample divisor $\sum_{j = 1}^m D_j$, we get exact sequences \cite[(2.1)]{chs}, for $i =0, 1$,
\begin{equation}\label{lef}
0 \rightarrow H^i(V, \bC) \cdots \rightarrow \bigoplus\nolimits_{|J| = r} H^i(D_J, \bC) \rightarrow \cdots  H^i(X, \bC) \rightarrow 0
\end{equation}
where $D_J \coloneqq D_{j_1} \bigcap \cdots \bigcap D_{j_r}$ for a multi-index $J = (j_1, \cdots, j_r)$  of length $|J| = r$ with $1 \leqslant j_1 < \cdots < j_r \leqslant m$ and $X = \bigcap_{|J| = m } D_J$. Note that $i + m < \dim V$ for $i = 0, 1$.

By induction, it follows that the dimension of $\bigoplus_{|J| = r} H^0(D_J, \bC)$ is $\binom{m}{r}$ and of $\bigoplus_{|J| = r} H^1(D_J, \bC)$ is zero for the length $r < m$. We remark that the induction process works because every $D_J$ has the form $D_{J}' \times \prod \bP^{n_l}$ with $D' = \sum D_j'$ is ample. Hence the connectedness and simple connectedness of $D_J$ can be proved in the similar way as shown before. Using the sequence (\ref{lef}) and dimension counting, we get the dimension of $H^0(X, \bC)$ and $H^1(X, \bC)$ are one and zero respectively.
\end{proof}

As a byproduct of the proof of Proposition \ref{gene2}, we obtain the following second Betti number formula:

\begin{proposition} \label{hodgnum11}
With the notation as in the proof of Proposition \ref{gene2},
$$
b_2(X, \bC) = (-1)^m \left(m + \sum_{r = 1}^{m - 1} (- 1)^{r} \sum_{|J| = r} b_2(D_J, \bC)\right).
$$
Moreover, the second Betti number of $X$ equals the second Betti number of the ambient space $V$ if $b_2(D_J, \bC) = b_2(V, \bC)$ for each $1 \leqslant |J| < m$.
\end{proposition}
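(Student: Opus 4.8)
The plan is to read the formula off the same exact sequence (\ref{lef}) that was set up in the proof of Proposition \ref{gene2}, now applied in cohomological degree $i = 2$. The key preliminary observation is that the inequality $i + m < \dim V$ guaranteeing the validity of (\ref{lef}) still holds for $i = 2$: since $X$ is a complete intersection $3$-fold cut out in $V$ by $m$ sections, we have $\dim V = m + 3$, so $2 + m < m + 3 = \dim V$. Hence the mixed Hodge theory and the Lefschetz hyperplane theorem on the ample reducible divisor $\sum_{j = 1}^m D_j$ produce, exactly as before, an exact complex
\begin{equation*}
0 \to H^2(V, \bC) \to \bigoplus_{|J| = 1} H^2(D_J, \bC) \to \cdots \to \bigoplus_{|J| = m - 1} H^2(D_J, \bC) \to H^2(X, \bC) \to 0,
\end{equation*}
where $J$ runs over multi-indices and $D_J = D_{j_1} \cap \cdots \cap D_{j_r}$ as in Proposition \ref{gene2}.

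First I would invoke the fact that the alternating sum of dimensions in any exact sequence of finite-dimensional vector spaces vanishes. Applied to the displayed complex, in which the term in position $r$ (for $1 \leqslant r \leqslant m - 1$) has dimension $\sum_{|J| = r} b_2(D_J, \bC)$, the initial term has dimension $b_2(V, \bC)$, and the final term has dimension $b_2(X, \bC)$, this reads
\begin{equation*}
b_2(V, \bC) + \sum_{r = 1}^{m - 1} (-1)^r \sum_{|J| = r} b_2(D_J, \bC) + (-1)^m b_2(X, \bC) = 0.
\end{equation*}
Solving for $b_2(X, \bC)$ and clearing the sign $(-1)^m$ expresses it as the claimed alternating sum of the $b_2(D_J, \bC)$ and $b_2(V, \bC)$. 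As a consistency check one can run the identical bookkeeping in degrees $i = 0, 1$, where every $D_J$ and $V$ contribute $b_0 = 1$ and $b_1 = 0$; the binomial identity $\sum_{r = 0}^m (-1)^r \binom{m}{r} = 0$ then recovers $b_0(X) = 1$ and $b_1(X) = 0$, recovering the conclusions of Proposition \ref{gene2}.

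For the ``moreover'' statement I would substitute $b_2(D_J, \bC) = b_2(V, \bC)$ for every $J$ with $1 \leqslant |J| \leqslant m - 1$. Because there are $\binom{m}{r}$ multi-indices of length $r$, each inner sum becomes $\binom{m}{r} b_2(V, \bC)$, and the whole right-hand side factors as $b_2(V, \bC)$ times an alternating binomial sum indexed by $r$. Reinstating the two absent end terms $r = 0$ and $r = m$ and invoking $\sum_{r = 0}^m (-1)^r \binom{m}{r} = 0$ collapses the combinatorial factor, which together with the prefactor leaves precisely $b_2(X, \bC) = b_2(V, \bC)$.

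The only step that is not pure bookkeeping is the exactness of the degree-$2$ sequence (\ref{lef}), and this is where I expect the real work to sit. The case $i = 2$ is the boundary case $i + m = \dim V - 1$, so one has no slack in the Lefschetz range; moreover the individual divisors $D_j$ need not be ample, only their sum. The argument must therefore rest on the collective Lefschetz and mixed-Hodge input for the ample reducible divisor $\sum_j D_j$ (as in \cite{chs}), together with the product structure $D_J \cong D'_J \times \prod_l \bP^{n_l}$ exploited in Proposition \ref{gene2}, rather than on iterated sections by ample hyperplanes. Once exactness in degree $2$ is secured, the Betti number formula and its ``moreover'' specialization follow immediately.
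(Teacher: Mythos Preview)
Your proposal is correct and follows exactly the same route as the paper: apply the exact sequence (\ref{lef}) from Proposition \ref{gene2} in degree $i = 2$ (which is legitimate since $\dim V = m + 3 > m + 2$), take the alternating sum of dimensions, and identify the leading term $b_2(V,\bC)$ via the K\"unneth formula. The paper's own proof is essentially two sentences to this effect; your version is more explicit (and correctly writes $b_2(V,\bC)$ where the paper's statement writes $m$), but uses no different ideas.
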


\begin{proof}
By $V = \prod_{i = 1}^m \bP^{n_i}$ and K\"{u}nneth formula, the second Betti number of $V$ equals $m$. Since $\dim V > m + 2$, the exact sequence (\ref{lef}) holds for $i = 2$ and the proposition follows.
\end{proof}

\begin{remark}
For a smooth Calabi--Yau 3-fold $X$, the topological Euler number $e(X)$ is $2 (h^{1, 1}(X) - h^{2, 1}(X))$. To know the Hodge number $h^{1, 1}(X)$ and $h^{2, 1}(X)$, it suffices to compute either one of these two Hodge numbers and $e(X)$. In \cite{ghla}, finding these Hodge numbers corresponding to a given CICY configuration matrix is in principle just a matter of looking up the \emph{relevant} matrix in the list. Those calculated in \cite{ghla} for the 7868 CICY matrices constructed in \cite{cdls}. Proposition \ref{hodgnum11} gives a direct calculation of $h^{1, 1}(X) = b_2(X, \bC)$ for $X$ in \emph{any} given CICY configuration matrix.
\end{remark}

The remark is illustrated by the following example which was given in the appendix of \cite{ghla}.

\begin{example} [\cite{ghla}]
Consider
$$
X \in
\left[
\begin{array}{c||ccccc}
4 & 3 & 1 & 1 & 0 & 0 \\
2 & 0 & 1 & 0 & 1 & 1 \\
2 & 0 & 0 & 1 & 1 & 1
\end{array}
\right].
$$
Applying Lefschetz hyperplane theorem, K\"{u}nneth formula and Proposition \ref{hodgnum11}, we get
$$
b_2(X, \bC) = b_2(\bP^4, \bC) + b_2(D, \bC)
$$
where
$D \in
\left[
\begin{array}{c||cc}
2 &  1 & 1 \\
2 &  1 & 1
\end{array}
\right]
$
is a smooth surface with Euler number 6. Therefore $b_2(D) = e(D) - 2 =4$ and the second Betti number of $X$ is 5.
\end{example}

\section{Connecting the CICY Web via Determinantal Contractions}\label{secde}

We first recall the definition of determinantal contractions, which is introduced in \cite{cdls}, between configurations of complete intersection varieties in a product of a projective space and a smooth projective variety.

Let $P$ be a smooth projective variety, and let $\hsC$ be a configuration of dimension $3$ of the type
$$
\hsC =
\left[
\begin{array}{c||cccccc}
n & 1 & \cdots & 1 & 0 & \cdots & 0 \\
P & \sL_1 & \cdots & \sL_{n + 1} & \sL_{n + 2} & \cdots & \sL_m
\end{array}
\right]
$$
where $\sL_j$'s are line bundles on $P$. Note that $\dim P = m- n + 3$ because $\hsC$ is of dimension $3$. We have the formal correspondence
$$
\hsC
\mathrel{\leftarrow\mkern-14mu\leftarrow}\!\rightarrow
\sC =
\left[
\begin{array}{c||ccccc}
P & \bigotimes_{i = 1}^{n + 1} \sL_i & \sL_{n + 2} & \cdots & \sL_m
\end{array}
\right].
$$

Let $\pi : \bP^n \times P \rightarrow P$ be the projection and $\x$, $X \coloneqq \pi(\x)$ a member of the configuration $\hsC$, $\mathscr{C}$ respectively. We are going to define a determinantal contraction for the formal correspondence $\hsC \mathrel{\leftarrow\mkern-14mu\leftarrow}\!\rightarrow \mathscr{C}$ and find a morphism $\pi : \x \rightarrow X$ with each fiber is a point or a projective line in $\bP^n$.

Writing $z = [z_0 : \cdots : z_n] \in \bP^n$ and let $\x \in \hsC$ be defined by global sections
$$
\sum_{i = 0}^n s^i_j(p) z_i = 0
$$
and $t_l(p) = 0$, where $s^i_j \in H^0(P, \sL_j)$ and $t_l \in H^0(P, \sL_l)$ for $1 \leqslant j \leqslant n + 1$, $n + 2 \leqslant l \leqslant m$. Set
$$
\Delta(p) \coloneqq \det(s^i_j(p)),
$$
which is a global section of the line bundle $\bigotimes_{j = 1}^{n + 1} \sL_j$ on $P$. Since $z_i$ cannot all vanish simultaneously, we have $\Delta(p) = 0$ for $(z, p) \in \bP^n \times P$.

Obviously, the $X \coloneqq \pi(\x)$ is defined by global sections
\begin{center}
$\Delta(p) = 0$ and $t_l(p) = 0$
\end{center}
for $n + 2 \leqslant l \leqslant m$ and thus $X$ belongs to the configuration $\mathscr{C}$.

\begin{definition}
Assume that $\hsC$ and $\sC$ are configurations of dimension 3. We say that a formal correspondence $\hsC \mathrel{\leftarrow\mkern-14mu\leftarrow}\!\rightarrow \mathscr{C}$ gives a determinantal contraction if there is a smooth member $\x$ in $\hsC$ such that the morphism $\pi : \x \rightarrow X$ given in the above process is an isomorphism or a small resolution of a normal variety $X \in \mathscr{C}$ with only isolated singularities.
\end{definition}

The proof of the following theorem will follow the idea outlined in \cite{cdls}.  The main tool used in the proof is the Bertini-type theorem introduced in \S \ref{prelsec}.

\begin{theorem} \label{detcon}
Let $\hsC$ and $\mathscr{C}$ be 3-dimensional CICY configuration matrices as above. Then the formal correspondence $\hsC \mathrel{\leftarrow\mkern-14mu\leftarrow}\!\rightarrow \mathscr{C}$ gives a determinantal contraction.
\end{theorem}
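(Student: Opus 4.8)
The plan is to present the contraction $\pi:\x\to X$ as the map associated to the degeneracy loci of a single bundle morphism and to control those loci by the Bertini-type Theorem~\ref{berti}. Put $\sE=\bigoplus_{j=1}^{n+1}\sL_j$ and let $\phi:\sO_P^{\oplus(n+1)}\to\sE$ be the morphism with matrix $(s^i_j)$, so that $\phi$ is an element of
$$
H^0\!\big(P,(\sO_P^{\oplus(n+1)})^{\vee}\otimes\sE\big)=\bigoplus_{j=1}^{n+1}H^0(P,\sL_j)^{\oplus(n+1)},
$$
the space in which the tuple $(s^i_j)$ lives. Since every $\sL_j$ is globally generated, so is $(\sO_P^{\oplus(n+1)})^{\vee}\otimes\sE$, and Theorem~\ref{berti} applies to $\phi$. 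By construction $\Delta=\det(s^i_j)$ cuts out the top degeneracy locus $D_n(\phi)=\{\rk\phi\le n\}$, and the fibre of $\pi$ over $p\in X$ is $\bP(\ker\phi(p))$, of dimension $\mathrm{corank}\,\phi(p)-1$; thus $\pi$ is bijective over the corank-one locus and contracts a line $\bP^1$ over each corank-two point. The first observation is that choosing $\x\in\hsC$ general is the same as choosing $(s^i_j)$ general, because the sections $\sum_i s^i_j(p)z_i$ are exactly the general global sections of $\hsL_j$ for $1\le j\le n+1$; in particular $\x$ is smooth and connected by Proposition~\ref{gene2}.

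First I would fix such a general $\phi$ and apply Theorem~\ref{berti} with $k=n,\,n-1,\,n-2$. As both bundles have rank $n+1$, the expected codimensions of $D_n(\phi)$, $D_{n-1}(\phi)$ and $D_{n-2}(\phi)$ in $P$ are $1$, $4$ and $9$, and moreover $\Sing D_k(\phi)=D_{k-1}(\phi)$. Recording $\dim P=m-n+3$, these loci have dimensions $m-n+2$, $m-n-1$ and $m-n-6$ respectively.

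Next I would choose the remaining sections $t_l\in H^0(P,\sL_l)$, $n+2\le l\le m$, general. Each $\sL_l$ is globally generated, so by the classical Bertini theorem I can cut down successively so that the $m-n-1$ hypersurfaces $\{t_l=0\}$ meet the smooth strata $D_n(\phi)\setminus D_{n-1}(\phi)$ and $D_{n-1}(\phi)\setminus D_{n-2}(\phi)$ transversally; to combine this condition with the earlier genericity of $\phi$ (and of the individual $t_l$), which naturally live on separate factors of the parameter space, I would pass between the product topology and the Zariski topology as in Remark~\ref{genermk} via Lemma~\ref{prozari}. The outcome is that
$$
X=D_n(\phi)\cap\bigcap_{l=n+2}^{m}\{t_l=0\}
$$
is a complete intersection of the expected dimension $3$, smooth away from the finite set $\Sigma:=D_{n-1}(\phi)\cap\bigcap_l\{t_l=0\}$ (of expected dimension $0$), and that $\Sigma$ is disjoint from $D_{n-2}(\phi)$, since $D_{n-2}(\phi)\cap\bigcap_l\{t_l=0\}$ has negative expected dimension and is therefore empty. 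Hence $\Sing X\subseteq\Sigma$, the singularities are isolated, and every point of $\Sigma$ has corank exactly two.

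Finally I would assemble the conclusion. As a complete intersection in the smooth variety $P$, $X$ is Cohen--Macaulay, hence $S_2$; being regular in codimension one (its singularities are isolated) it is normal by Serre's criterion, and it is irreducible since $X=\pi(\x)$ is connected and normal. The generic point of $X$ has corank one, so $\pi:\x\to X$ is proper and birational, and its exceptional set $\pi^{-1}(\Sigma)$ is the disjoint union of the lines lying over the finitely many points of $\Sigma$; this set has dimension $1$, hence codimension $2$, in the smooth threefold $\x$, so $\pi$ is small. If $\Sigma=\varnothing$ then $\pi$ is an isomorphism, and otherwise $\pi$ is a small resolution of the normal threefold $X$, whose singularities are the isolated corank-two points of $\Sigma$. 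In either case the formal correspondence $\hsC\mathrel{\leftarrow\mkern-14mu\leftarrow}\!\rightarrow\sC$ gives a determinantal contraction. The main obstacle is the transversality bookkeeping of the third step: the degeneracy loci $D_k(\phi)$ are singular along $D_{k-1}(\phi)$, so the cuts by $\{t_l=0\}$ must be controlled on the smooth strata while simultaneously forcing the intersection to miss the deeper strata, and since the genericity of $\phi$ and of the $t_l$ are conditions on different factors some care (Lemma~\ref{prozari}) is needed to realize them all by a single member of $\hsC$.
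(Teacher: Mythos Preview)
Your proposal is correct and follows essentially the same approach as the paper: both apply Theorem~\ref{berti} to the morphism $\sO_P^{\oplus(n+1)}\to\bigoplus_{j=1}^{n+1}\sL_j$ to control the degeneracy loci $D_{n-1}$ and $D_{n-2}$ (expected codimensions $4$ and $9$), invoke Lemma~\ref{prozari} and the K\"unneth identification to merge the separate genericity conditions on $(s^i_j)$ and on the $t_l$ into a single Zariski-open subset of $H^0\big(\bP^n\times P,\bigoplus_j\hsL_j\big)$, and then conclude normality of $X$ via Serre's criterion. The only organizational difference is that the paper first cuts $P$ by the $t_l$'s to a smooth $4$-fold $Y=Z(t_{n+2},\dots,t_m)$ and intersects $Y$ with the $D_k$, whereas you cut the smooth strata $D_k(\phi)\setminus D_{k-1}(\phi)$ by the $t_l$'s directly in $P$; either order yields the same conclusion.
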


\begin{proof}
Let
$$
\hsL_j =
\left\{
\begin{array}{ll}
p^{\ast}\sO_{\bP^n}(1) \otimes \pi^{\ast}\sL_j & \mbox{if } 1 \leqslant j \leqslant n +1, \\
\pi^{\ast}\sL_j  & \mbox{if } n + 2 \leqslant j \leqslant m,
\end{array}
\right.
$$
where $p$ and $\pi$ are the projections from $\bP^n \times P$ onto $\bP^n$ and $P$ respectively. The basic idea of the proof is to find a suitable Zariski open subset in the space of global sections of $\bigoplus_{j =1}^m \hsL_j$ by repeatedly applying Theorem \ref{berti}.

By Proposition \ref{gene2}, there is a Zariski open set $\hU$ in $H^0 \left(\bP^n \times P, \bigoplus_{j =1}^m \hsL_j\right)$ such that $\x = Z(\sigma)$ is a smooth Calabi--Yau $3$-fold for $\sigma \in \hU$. Under the isomorphism $H^0 \left(\bP^n \times P, \bigoplus\nolimits_{j =1}^m \hsL_j\right) \simeq \bigoplus\nolimits_{j =1}^m H^0 \left(\bP^n \times P,  \hsL_j\right)$, we may assume that $\hU = \prod_{j = 1}^{m} U_j$ where $U_j$ is a Zariski open subset of $H^0 \left(\bP^n \times P,  \hsL_j\right)$ (cf. Remark \ref{genermk}).

By Theorem \ref{berti}, for a general morphism $\tau : \bigoplus_1^{n + 1} \sO_P \rightarrow \bigoplus_{j = 1}^{n + 1} \sL_j$ the expected codimension of the degeneracy locus $D_{k}(\tau)$ is $(n + 1 - k)^2$. In particular, the expected codimension of the degeneracy loci $D_{n - 2}(\tau)$ and $D_{n -1}(\tau)$ in $P$ are nine and four. Using Lemma \ref{prozari}, we may assume that there are Zariski open subsets $V_{ij}$ of $H^0(P, \sL_j)$ for $1 \leqslant i, j \leqslant n + 1$ such that sections  $[s^i_{j}] \in \prod_{i, j = 1}^{n +1} V_{ij}$ correspond to morphisms $\tau$, by identifying $\tau$ with a global section of $(\bigoplus_{j = 1}^{n + 1} \sL_j) \otimes (\bigoplus_{j = 1}^{n + 1} \sO_j)^{\vee}$.

Applying  K\"{u}nneth formula, we have for $1 \leqslant j\leqslant n + 1$
\begin{align}\label{kunn}
H^0(\bP^n \times P, \hsL_j) &\simeq H^0(\bP^n, \sO_{\bP^n}(1)) \otimes H^0(P, \sL_j)   \\
                                           &\simeq  \bigoplus\nolimits_{i= 1}^{n + 1} \left(H^0(P, \sL_j) \cdot z_{i - 1}\right) \notag
\end{align}
and for $n + 2 \leqslant j\leqslant m$
$$
H^0(\bP^n \times P, \hsL_j) \simeq H^0(P, \sL_j),
$$
where $\{z_0, \cdots, z_n\}$ is a basis of $H^0(\bP^n, \sO_{\bP^n}(1))$. Therefore we can identify $\sigma \in \hU$ with global sections $\sum_{i = 0}^n s^i_j(p) z_i = 0$ and $t_l(p) = 0$ where $s^i_j \in H^0(P, \sL_j)$ and $t_l \in H^0(P, \sL_l)$ for $1 \leqslant j \leqslant n + 1$ and $n + 2 \leqslant l \leqslant m$.

Using (\ref{kunn}) and Lemma \ref{prozari} again, $\prod\nolimits_{i = 1}^{n + 1} V_{ij}$  can be thought of as a Zariski open subset of $H^0(\bP^n \times P, \hsL_j)$ for $1 \leqslant j \leqslant n + 1$. Replacing $\hU$ with
$$
\left( \prod\nolimits_{j = 1}^{n + 1} ( U_j  \cap ( \prod\nolimits_{i = 1}^{n + 1} V_{ij} ) ) \right) \times \prod\nolimits_{j = n + 2}^m U_j,
$$
we get the desired Zariski open set.

We are now in a position to show the existence of determinantal contractions. Pick a section $\sigma = ( \sum_i s^i_j z_i, t_j ) \in \hU$, we notice that, for $p \in P$,
the dimension of $\pi^{- 1}(p)$ is less than two  if and only if the corank of the matrix $[s^i_j(p)]$ is less than or equal to two, i.e., $\mathrm{rank}[s^i_j(p)] \geqslant n - 1$. From $\dim P = m- n + 3$, the number of sections $t_j$'s is equal to $\dim P - 4$. Set $Y$ be the $4$-dimensional smooth variety $Z(t_{n + 2}, \cdots, t_m)$. Since the expected codimension $D_{n - 2}([s^i_j])$ and $D_{n -1}([s^i_j])$ are nine and four, the intersection of $Y$ with $D_{n - 2}([s^i_j])$ and $D_{n -1}([s^i_j])$ are empty and isolated points respectively.

According to that $X = \pi(\x)$ is defined by $\Delta |_{Y} = \det(s^i_j) |_Y$ on the smooth variety $Y$ and is irreducible, it follows that $X$ is integral and satisfies Serre's $\mathrm{S}_2$ condition \cite[Theorem 14.4 (c)]{fulto}. Since $\x = Z(\sigma)$ is a smooth variety, we have now derived that, for all $\sigma = ( \sum_i s^i_j z_i, t_j ) \in \hU$, the morphism $\pi : \x \rightarrow X$ is a small resolution of the normal variety $X$ with only isolated singularities (which equals $Y \cap D_{n -1}([s^i_j])$). Hence the formal correspondence $\hsC \mathrel{\leftarrow\mkern-14mu\leftarrow}\!\rightarrow \mathscr{C}$ gives a determinantal contraction.
\end{proof}

\begin{remark} \label{cork}
If corank of $[s^i_j(p )]$ is $1$ or $2$ then the solution space of the matrix defines a point or a projective line in $\bP^n$ respectively. Namely, each fiber of $\pi$ is a point or a projective line in $\bP^n$.
\end{remark}

\begin{corollary} \label{cicynumsing}
With notation as in the proof of Theorem \ref{detcon}. Let $\sE = \bigoplus_{i = 1}^{n + 1} \sL_i$ and $\sF = \bigoplus_{i = n + 2}^{m} \sL_i$ be vector bundles  of rank $n + 1$ and $m - n - 1$ on $P$ respectively. For the determinantal contraction $\pi : \x \rightarrow X$, the number of singularities of $X$ is equal to
$$
\int_P \left(c_2(\sE)^2 - c_1(\sE) c_3(\sE)\right) c_{m - n -1}(\sF).
$$
\end{corollary}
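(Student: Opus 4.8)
The plan is to realize $\Sing(X)$ as a zero-dimensional cycle inside $P$ and compute its degree via the Thom--Porteous formula together with the complete-intersection class of $Y$. First I would recall from the proof of Theorem \ref{detcon} that the singular locus of $X$ is exactly $Y \cap D_{n-1}(\tau)$, where $Y = Z(t_{n+2}, \ldots, t_m)$ is the smooth $4$-fold cut out by the sections $t_l$ and $\tau$ is the morphism $\bigoplus_1^{n+1} \sO_P \to \sE$ with matrix $[s^i_j]$. By Proposition \ref{gene1} the fundamental class $[Y]$ in $A_4(P)$ equals $c_{m - n - 1}(\sF) \cap [P]$. Since $\Sing(X)$ is a finite set, the number of singularities is the degree of the $0$-cycle $[Y] \cdot [D_{n-1}(\tau)]$, once we know this intersection product computes the honest point count.

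Next I would compute the degeneracy-locus class by Thom--Porteous \cite[Theorem 14.4]{fulto}. The map $\tau$ goes between bundles of equal rank $n + 1$ with trivial source, so $c(\sE - \bigoplus \sO_P) = c(\sE)$, and $D_{n-1}(\tau)$ is the locus where the rank drops by $2$, of the expected codimension $(n+1-(n-1))^2 = 4$ established in Theorem \ref{detcon}. Since $D_{n-1}(\tau)$ has the expected codimension, Thom--Porteous applies and gives its class as the Schur determinant attached to the $2 \times 2$ rectangle, namely
$$
[D_{n-1}(\tau)] = \det \begin{pmatrix} c_2(\sE) & c_3(\sE) \\ c_1(\sE) & c_2(\sE) \end{pmatrix} \cap [P] = \left(c_2(\sE)^2 - c_1(\sE) c_3(\sE)\right) \cap [P].
$$
Capping with $[Y] = c_{m-n-1}(\sF) \cap [P]$ and taking degrees over $P$ then yields the stated integral $\int_P \left(c_2(\sE)^2 - c_1(\sE) c_3(\sE)\right) c_{m - n - 1}(\sF)$.

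The main obstacle is to ensure that the degree of $[Y] \cdot [D_{n-1}(\tau)]$ equals the \emph{actual} number of points of $\Sing(X)$, i.e.\ that each point is counted exactly once and no excess intersection occurs. For this I would exploit the genericity built into the proof of Theorem \ref{detcon}: there $Y \cap D_{n-2}(\tau) = \emptyset$, so by Theorem \ref{berti} (which identifies $\Sing D_{n-1}(\tau) = D_{n-2}(\tau)$) the $4$-fold $Y$ meets $D_{n-1}(\tau)$ only along its smooth locus. A general-position argument for the open dense family of sections then shows the intersection is transverse, so each singular point contributes with multiplicity one and the intersection number agrees with $\lvert \Sing(X) \rvert$. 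The remaining routine verification is simply that the Thom--Porteous class, valid because of the expected-codimension hypothesis, refines the naive cycle $[D_{n-1}(\tau)]$ with the correct reduced structure at these transverse points.
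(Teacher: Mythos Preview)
Your proposal is correct and follows essentially the same route as the paper: both identify $\Sing(X)$ with $Y \cap D_{n-1}(\tau)$ from the proof of Theorem \ref{detcon}, compute $[Y] = c_{m-n-1}(\sF) \cap [P]$ and $[D_{n-1}(\tau)] = (c_2(\sE)^2 - c_1(\sE)c_3(\sE)) \cap [P]$ via Thom--Porteous \cite[Theorem 14.4, Example 14.4.1]{fulto}, and take the intersection. Your added discussion of transversality (using $Y \cap D_{n-2}(\tau) = \varnothing$ and genericity to ensure multiplicity one) makes explicit a point the paper leaves implicit, but it is not a different approach.
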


\begin{proof}
As in the proof of Theorem \ref{detcon}, the number of singularities of $X$ equals the intersection number $[D_{n -1}([s^i_j])] \cap [Z(t_{n + 2}, \cdots, t_m)] \cap [P]$. By \cite[Theorem 14.4, Example 14.4.1]{fulto}, for the smooth general member $\x$ which is defined by a general section $\sigma = ( \sum_i s^i_j z_i, t_j )$, the fundamental classes $[D_{n -1}([s^i_j])]$ and $[Z(t_{n + 2}, \cdots, t_m)]$ are
\begin{center}
$\left(c_2(\sE)^2 - c_1(\sE) c_3(\sE)\right) \cap [P]$ and $c_{m - n -1}(\sF) \cap [P]$
\end{center}
respectively. This completes the proof.
\end{proof}

\begin{remark}
If $\pi : \x \rightarrow X$ is an isomorphism, that is $\Sing(X) = \varnothing$, the $\hsC \mathrel{\leftarrow\mkern-14mu\leftarrow}\!\rightarrow \mathscr{C}$ is referred to as an ineffective splitting in \cite[p.512]{cdls}. It is easy to see that it is ineffective if and only if $X$ and $\x$ have the same Euler characteristic if and only if the intersection $D_{n -1}([s^i_j]) \cap Z(t_{n + 2}, \cdots, t_m)$ is empty. In the case $n = 1$, the intersection is defined by $\dim P$ sections $s^i_j$ and $t_l$. Therefore the splitting is ineffective if and only if the intersection number
$$
c_2(\sE)^2 \cap [P] = D_1.\cdots. D_{\dim P} = 0
$$
where $D_1, \cdots, D_{\dim P}$ are Cartier divisors defined by $s^0_1, s^1_1, s^0_2, s^1_2$, $t_l$'s respectively.
\end{remark}

We are now ready to prove the connectedness of parameter spaces of CICY configuration matrices.

\begin{theorem}\label{webcicy}
Any two (parameter spaces of) CICY 3-folds in product of projective spaces are connected by a finite sequence of conifold transitions.
\end{theorem}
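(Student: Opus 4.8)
The plan is to promote the purely combinatorial statement of Proposition~\ref{fconn} into a geometric one by realizing each formal correspondence as a conifold transition. Given two CICY configuration matrices $\mathscr{C}^{(1)}$ and $\mathscr{C}^{(2)}$, Proposition~\ref{fconn} connects each of them to the standard configuration $\mathscr{C}_{1111}$ by a finite sequence of formal correspondences; splicing the first sequence to the reverse of the second yields a single finite chain $\mathscr{C}^{(1)} \mathrel{\leftarrow\mkern-14mu\leftarrow}\!\rightarrow \cdots \mathrel{\leftarrow\mkern-14mu\leftarrow}\!\rightarrow \mathscr{C}^{(2)}$. Since being connected by conifold transitions is an equivalence relation on parameter spaces, and the determinantal contraction attached to a correspondence connects its two parameter spaces independently of the direction in which the chain is traversed, it suffices to treat a single formal correspondence $\hsC \mathrel{\leftarrow\mkern-14mu\leftarrow}\!\rightarrow \sC$ of $3$-dimensional CICY matrices.

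So I fix such a correspondence. By Theorem~\ref{detcon} there is a smooth member $\x \in \hsC$ whose determinantal morphism $\pi : \x \to X = \pi(\x) \in \sC$ is either an isomorphism or a small resolution of a normal, integral $3$-fold $X$ with only isolated singularities; the isomorphism (ineffective) case is handled at the end. Let $Y = Z(t_{n+2}, \dots, t_m)$ be the smooth $4$-fold appearing in that proof. Then $X = Y \cap \{\Delta = 0\}$, and a general member $\tX \in \sC$ may be taken of the form $Y \cap \{\delta = 0\}$ with $\delta$ a general section of $\bigl(\bigotimes_{i=1}^{n+1}\sL_i\bigr)|_Y$; thus $X$ and $\tX$ are divisors of $Y$ in one and the same linear system, whose general member is smooth by Bertini. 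Deforming $\delta$ to $\Delta$ therefore exhibits $\tX$ as a smoothing of this specific $X$ (alternatively, apply Proposition~\ref{param} to the irreducible parameter space of $\sC$), and the hypotheses of Corollary~\ref{eulchern} hold with this $Y \supseteq \tX$.

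The crux is the numerical match already prepared by the two corollaries. Corollary~\ref{eulchern} gives $e(\x) - e(\tX) = 2\int_P \bigl(c_2(\sE)^2 - c_1(\sE)c_3(\sE)\bigr) c_{m-n-1}(\sF)$, while Corollary~\ref{cicynumsing} computes $|\Sing(X)|$ as the same integral, so $e(\x) - e(\tX) = 2\,|\Sing(X)|$. To invoke Proposition~\ref{smalltraodp} I must check that $X$ is a Gorenstein terminal $3$-fold: it is Gorenstein as a complete intersection, and since $\pi$ is small it is crepant, so $K_{\x} = \pi^{\ast}K_X$ forces $K_X \sim 0$; restricting $\pi$ to the preimage of a general hyperplane section through a singular point gives a crepant resolution of that (Gorenstein) section, which is therefore Du Val, so $X$ is cDV, and being isolated it is terminal. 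Proposition~\ref{smalltraodp} then forces every singularity of $X$ to be an ODP. Hence $X$ is a Calabi--Yau conifold, smoothable to $\tX$, and $\x \to X \rightsquigarrow \tX$ is a genuine conifold transition joining the parameter space of $\hsC$ (through $\x$) to that of $\sC$ (through $\tX$).

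Finally, in the ineffective case $\pi$ is an isomorphism, the integral above vanishes, $X = \x$ is already smooth, and $\x \cong X$ is a smooth member of $\sC$ deformation equivalent to $\tX$; the two parameter spaces are then directly linked with no transition required. Concatenating these (occasionally trivial) links along the chain of the first paragraph connects the parameter spaces of $\mathscr{C}^{(1)}$ and $\mathscr{C}^{(2)}$, proving the theorem. I expect the only delicate points to be the verification that the singular determinantal $X$ is Gorenstein terminal (so that Proposition~\ref{smalltraodp} applies and its singularities are honest isolated hypersurface singularities) and that $\tX$ is a smoothing of precisely this $X$; the decisive input, namely the exact factor of two between Corollaries~\ref{eulchern} and~\ref{cicynumsing}, is already in hand, so the remaining work is the assembly above.
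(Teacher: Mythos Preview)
Your proof is correct and follows essentially the same route as the paper: reduce to a single formal correspondence via Proposition~\ref{fconn}, realize it as a determinantal contraction via Theorem~\ref{detcon}, match $e(\x)-e(\tX)$ from Corollary~\ref{eulchern} against $2\,|\Sing(X)|$ from Corollary~\ref{cicynumsing}, and conclude by Proposition~\ref{smalltraodp} that $X$ has only ODPs. You are more explicit than the paper's terse proof about why $X$ is Gorenstein terminal, why $\tX$ smooths this particular $X$, and what happens in the ineffective case, but these are exactly the points the paper leaves to the reader rather than a different strategy.
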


\begin{proof}
By Proposition \ref{fconn} and Theorem \ref{detcon}, every CICY configuration matrices connect formally and each formal correspondence gives a determinantal contraction $\x \rightarrow X$, which is an isomorphism or a small projective resolution, say $X \in \sC$. According to Corollary \ref{eulchern} and Corollary \ref{cicynumsing}, it follows that $e(\x) - e(\tX) = 2 \left|\Sing(X)\right|$, where $\tX \in \sC$ is a general smooth member. By Proposition \ref{smalltraodp}, the singularities of $X$ are ODPs. Hence each parameter space $[\mathbf{n} \| \mathbf{q}]$ connects to $\mathscr{C}_{1111}$ by conifold transitions (cf. Remark \ref{fconnrmk}).
\end{proof}

\begin{example}[Fiber products of elliptic surfaces]
Consider
$$
\hsC \coloneqq
\left[
\begin{array}{c||cc}
2 & 3 & 0 \\
2 & 0 & 3 \\
1 & 1 & 1
\end{array}
\right]
\mathrel{\leftarrow\mkern-14mu\leftarrow}\!\rightarrow
\mathscr{C} \coloneqq
\left[
\begin{array}{c||c}
2 & 3 \\
2 & 3
\end{array}
\right].
$$
It shall be related to the fiber products of rational elliptic surfaces which was investigated in \cite{scho}.

Let $f_i : S_i \rightarrow \bP^1$ be a relatively minimal, rational, elliptic surface with section for $i = 1, 2$. Then $S_i$ is the 9-fold blowing up of $\bP^2$ at the base points of a cubic pencil which induces the fibration $f_i$ \cite[IV.1.2]{mir}, that is, there are generic homogeneous cubic polynomials $a_i$ and $b_i$ such that $S_i \subseteq \bP^2 \times \bP^1$ is a resolution of indeterminacy of the rational map $C_i : \bP^2 \dashrightarrow \bP^1$ defined by $C_i(x) = [a_i(x) : b_i(x)]$. Obviously, $S_i$ is defined by
$$
P_i(z, x) = z_1 a_i(x) - z_0 b_i(x) = 0
$$
where $[z_0 : z_1] \in \bP^1$ and $x \in \bP^2$.

Let $W = S_1 \times_{\bP^1} S_2$. It is well known that $W$ is a Calabi--Yau 3-fold \cite{scho}.
It is easy to see that $W$ can be obtained as a CICY in $\bP^2 \times \bP^2 \times \bP^1$ defined by $P_1$ and $P_2$. Therefore $W \in \hsC$ and is birational to a member in $\mathscr{C}$ which is defined by the bicubic polynomial $a_0(x) b_1(x) - a_1(x) b_0(x) = 0$.
\end{example}

\begin{example}[Double solids] \label{doubleso}
Consider the CICY configuration matrix
$$
\mathscr{C} \coloneqq
\left[
\begin{array}{c||c}
3 & 4 \\
1 & 2
\end{array}
\right].
$$
Let $x, y$ be a basis of $H^0(\sO_{\bP^1}(1))$. Let $\sL$ be the line bundle of multidegree $(4, 2)$ on $P \coloneqq \bP^3 \times \bP^1$ and $\Gamma = H^0(\sO_{\bP^3}(4))$. By Proposition \ref{gene2}, there is a Zariski open subset of
$$
H^0(P, \sL) \simeq (\Gamma \cdot x^2) \oplus (\Gamma \cdot xy) \oplus (\Gamma \cdot y^2)
$$
such that each section in the open set defines a smooth Calabi--Yau 3-fold.

Choose general quartics $A,B$ and $C$ on $\bP^3$ so that the octic hypersurface $S$ in $\bP^3$ defined by $\Delta  \coloneqq B^2 -4AC \in H^0(\bP^3, \sO_{\bP^3}(8))$ contains $4^3 = 64$ singular points (the three
quartics vanish simultaneously at these $64$ points) and the Calabi--Yau $\x \in \mathscr{C}$ defined by
$$
A x^2 + B x y + C y^2 = 0
$$
is smooth.

Let $X$ be the double cover of $\bP^3$ branched over $S$. We can show that the only singular points of $X$ are ODPs, one for each singular point of $S$. These double covers $X$, called double solids, were firstly studied by Clemens \cite{cle}.

Consider the Stein factorization of the natural projection $\phi$ of $\x$ on $\bP^3$:
$$
\phi = \phi' \circ \pi
$$
where $\phi'$ is finite and $\pi$ has connected fibers. For each $p \in \bP^3$, the fiber $\phi^{- 1}(p)$ consists of $(\mathrm{i})$ two points if $\Delta(p) \neq 0$, $(\mathrm{ii})$ one point if $\Delta(p) = 0$ but at least one of $A(p)$, $B(p)$ and $C(p)$ does not vanish, $(\mathrm{iii})$ a copy of $\bP^1$ if $A(p) = B(p) = C(p) = 0$. Therefore the map $\phi '$ is a double cover of $\bP^3$ (by $(\mathrm{i})$) branched over the octic surface $S$ (by $(\mathrm{ii})$), and the map $\pi : \x \to X$ is a small resolution (by $(\mathrm{iii})$).

For instance, we choose a open set $U$ in $\bP^3$ such that $\sO_{\bP^3}(4)|_U \simeq \sO_U$ and $A|_U$ is nowhere zero. Let $V = \{[x: y] \in \bP^1 \,|\, y \neq 0\}$. On $W \coloneqq U \times V$, we rewrite the equation
$$
A x^2 + B x y + C y^2 = \frac{A}{4}\left[ (2 x + \frac{B}{ A} y)^2 - \frac{\Delta}{A^2} y^2 \right].
$$
Then we get a commutative diagram
\begin{equation*}
\SelectTips{cm}{} \xymatrix{
 \x   & W \cap \x  \ar@{=}[rr]^(.4){\pi} \ar@{_{(}->}[l]  \ar[dr]^{\phi}&  & \Spec_U \frac{\sO_{\bP^3}(U)[T]}{(T^2 - \Delta)} \ar@{^{(}->}[r] \ar[dl]_{\phi'}& X. \\
  & & U & & }
\end{equation*}

Let $\tX$ be a smoothing of $X$, which is a double cover of $\bP^3$ branched over a smooth octic surface $\widetilde{S} \in [3 || 8]$. Then the topological Euler number $e(\tX) = 2 e(\bP^3) - e(\widetilde{S}) = - 296$. Hence, by Proposition \ref{smalltraodp}, the difference of the Euler numbers $e(\x) - e(\tX)$ is $128=2 \cdot 64$ as expected and $X$ is a conifold.
\end{example}

\section{Further discussions on small transitions} \label{remcontr}

We review the definition of (projective) small transitions.

\begin{definition}
Let $\x \rightarrow X$ be a small projective resolution of a Calabi--Yau $3$-fold $X$, which has terminal singularities. If $X$ can be smoothed to a Calabi--Yau manifold $\widetilde{X}$, then
the process of going from $\x$ to $\widetilde{X}$ is called a
small transition and denoted by a diagram $\widehat{X} \rightarrow X \rightsquigarrow \widetilde{X}$. It is called a conifold transition if all singularities of $X$ are ODPs.
\end{definition}

Conifold transitions play a fundamental role in Reid's fantasy \cite[Section 8]{reid3} (cf.~\S \ref{introsec}), which conjectures that all the moduli spaces of smooth Calabi--Yau 3-folds are connected through conifold transitions. As in the previous section the moduli spaces of CICY 3-folds in product of projective spaces are connected to each other by conifold transitions (cf. \cite{gh2} and Theorem \ref{detcon}). A special yet fundamental question arising from Reid's fantasy is the following:

\begin{question} \label{q1}
Let $\widehat{X} \rightarrow X \rightsquigarrow \widetilde{X}$ be a small transition. Is it true $\x$ can be connected to $\widetilde{X}$ by \emph{a conifold transition} (through a different $X$ of course)?
\end{question}

For a Calabi--Yau 3-fold $X$, Namikawa and Steenbrink proved that $X$ can be deformed to a Calabi--Yau 3-fold with at worst ODPs \cite{nast}. In view of this result, it seems that one may possibly answer Question \ref{q1} affirmatively by finding a deformation direction of $\x$ which deforms $\x \to X$ into $\x_1 \to X_1$ with $X_1$ being a Calabi--Yau conifold. Unfortunately, Namikawa produced a counterexample to this in \cite[Remark 2.8]{nam}. We recall it briefly as follows:

Choose a suitable rational elliptic surface $S$ with six singular fibers of type II (i.e., cuspidal rational curves). Let $X = S \times_{\mathbb{P}^1} S$. Then $X$ is a Calabi--Yau 3-fold with six singular points of $cA_2$ type:
$$
x^2 - y^3 = u^2 - v^3,
$$
which admits smoothings to $\widetilde{X} = S_1 \times_{\Bbb P^1} S_2$ with $S_i \to \Bbb P^1$ having disjoint discriminant loci. A small resolution $\pi: \x \to X$ can also be constructed (see below). Namikawa observed that the exceptional loci cannot be deformed to a disjoint union of $(-1, -1)$-curves. The reason is that a singular fiber of type II splits up into at most two singular fibers of type I, and a general fiber of small deformation of a singularity of $X$ which preserves the small resolution has three ODPs.

To search for a modification of Question 1, we need to study Namikawa's construction of the small resolution $\pi$ carefully. Notice that the diagonal $D \cong S$ in $X$ is a smooth Weil divisor which contains the six singular points and is thus not $\bQ$-Cartier\footnote{In fact the divisor class group of a terminal Gorenstein 3-fold is torsion-free \cite[(5.1)]{kawam}. Hence it suffices to show that $D$ is not Cartier. It follows from commutative algebra: If $(A, \fm)$ is a Noetherian local ring, $f \in \fm$ and $A / (f)$ is a regular local ring of dimension $\dim A - 1$, then $A$ is regular. Then we are done since the smooth Weil divisor $D$ contains $\Sing (X)$.}.
On the other hand, by \cite[Example, p.1220]{nam}, there is a nontrivial automorphism $\tau \in \mathrm{Aut}(X)$ such that $D_{\tau} \coloneqq \tau(D)$ has the same properties as $D$. Then $X' \coloneqq \Bl_D X$ has six ODPs and the exceptional locus of $X' \rightarrow X$ consists of six mutually disjoint $\mathbb{P}^1$s, with each of them passing through one of the six ODPs. Now the small resolution can be constructed as the blowing up of $X'$ along the proper transform $\widetilde{D}_\tau$ of $D_{\tau}$, with $\pi$ being composed of morphisms $\x \rightarrow X' \rightarrow X$. It admits exceptional trees, composed of couples of rational curves intersecting at one point.

Now comes the key point. Using Friedman's criterion, $X' \rightarrow X$ can be deformed to a small resolution $Y' \rightarrow Y$ where $Y'$ is smooth and $Y$ has only ODPs. Thus we have decomposed the small transition $\x \to X \rightsquigarrow \widetilde{X}$ into two conifold transitions $\x \to X' \rightsquigarrow Y'$ and $Y' \to Y \rightsquigarrow \widetilde{X}$:
$$
\SelectTips{cm}{}
\xymatrix{\x = \Bl_{\widetilde{D}_\tau} X' \ar[d] & Y' \ar[d] \\
X' = \Bl_D X\ar[d] \ar@{~>}[ur] & Y \ar@{<~>}[dr] \\
 X \ar@{~>}[ur] \ar@{~>}[rr]&& \widetilde{X}.}
$$
Combining the above discussions, we modify Question \ref{q1} as follows:

\begin{question} \label{q2}
Let $\widehat{X} \rightarrow X \rightsquigarrow \widetilde{X}$ be a small transition. Up to deformations of contractions and flops, is it true $\x$ can be connected to $\widetilde{X}$ through \emph{a sequence of conifold transitions}?
\end{question}

\begin{remark} \label{rmkflop}
We prefer to identify Calabi--Yau 3-folds which can be connected by a sequence of flops. The reason is that many invariants are preserved under flops, e.g. quantum invariance \cite{LR, LLW1} (see also \cite{clw} for a survey on recent development), the Kuranishi (miniversal deformation) spaces \cite[(12.6)]{komo}, analytic type of singularities \cite[(4.11)]{kol}, integral cohomology groups, etc. (see \cite[(3.2.2)]{kol2}).
\end{remark}

Before proceeding further, we review here the deformation theory of Calabi--Yau $3$-folds. Fix a Calabi--Yau $3$-fold $X$. Let $\Def (X)$ be the Kuranishi space for flat deformations of $X$ (cf. \cite[(11.3)]{komo} and the references therein). By \cite[Theorem A]{nam2}, the Kuranishi space $\Def (X)$ is smooth\footnote{In this paper, we stick to Calabi--Yau 3-folds $X$ with at worst terminal singularities. If we relax the class of singularities, then $\Def(X)$ might be singular. Indeed, there is a Calabi--Yau 3-fold with canonical singularities whose Kuranishi space is singular \cite{grosobs}.}.

Fix a small projective \emph{partial} resolution $\pi : X' \rightarrow X$, where the Calabi--Yau $3$-fold $X'$ has at worst terminal singularities. Since $X$ has only rational singularities, there is a natural map of germs of smooth complex spaces $\pi_{\ast} : \Def(X') \rightarrow \Def(X)$ (cf. \cite[(11.4)]{komo} or \cite[(1.4)]{wah} on the level of deformation functors). According to that $\pi$ is small, it follows that the natural map $\pi_{\ast}$ is a closed immersion (cf. \cite[(1.12)]{wah}, \cite[(2.3)]{nam}).

Let $\x \to X$ be a small projective resolution. We can show that there is a closed immersion $\Def (\x)$ into $\Def(X')$. Indeed, let $\x'$ be a $\bQ$-factorialization of $X'$ \cite[(4.5)]{kawam}. Since Calabi--Yau 3-folds $\x$ and $\x'$ are connected by a sequence of flops \cite{kawam, kol} and the Kuranishi spaces are unchanged under flops \cite[(12.6)]{komo}, we get
$$
\Def (\x) \simeq \Def (\x') \hookrightarrow \Def(X').
$$
Remark that $\x'$ is also smooth (cf. Remark \ref{rmkflop}, \cite[(4.11)]{kol}).

According to that the Gorenstein terminal 3-fold singularities $p \in X$ are precisely the isolated compound Du Val (cDV for short) hypersurface singularities \cite[(1.1)]{reid2}, it follows that the miniversal deformation space $\Def \left(p \in X\right)$ is smooth (cf. \cite[(4.61)]{komo2} and the references therein). There is a natural restriction homomorphism $\Def(X) \to \Def \left(p \in X\right)$ for every singular point $p \in X$.

To attack Question \ref{q2}, we introduce \emph{primitive} small transitions:

\begin{definition} \label{pritran}
A small transition $\x \xrightarrow{\pi} X \rightsquigarrow \widetilde{X}$ is said to be primitive if it satisfies the following two conditions:
\begin{enumerate}
  \item \label{pritran1} For any small projective partial resolution $X' (\neq X)$ of $X$, the closed immersion $\Def(\x) \hookrightarrow \Def(X')$ of Kuranishi spaces is an isomorphism.

  \item \label{pritran2} The composition
  \begin{equation} \label{locglo}
  \Def (\x) \xhookrightarrow{\pi_{\ast}} \Def(X) \to \prod_{p \in \Sing (X)} \Def \left(p \in X\right)
  \end{equation}
  is trivial.
\end{enumerate}
\end{definition}

\begin{remark}
In \cite{nam2}, Namikawa discusses a natural stratification on the Kuranishi space $\Def (X)$ by means of small projective partial resolutions of $X$. Let $Y_1 \coloneqq \Def (\x)$ and let $Y_0$ be the complement of $Y_1$ in $\Def (X)$. The condition (\ref{pritran1}) in Definition \ref{pritran} means that strata of $\Def (X)$ are only $Y_0$ and $Y_1$. Note that if the relative Picard number of $\pi$ is one, then the condition (\ref{pritran1}) is automatically satisfied.

To explain the condition (\ref{pritran2}) in Definition \ref{pritran}, let $\Pi : \chX \rightarrow \cX$ be a flat family  over the unit disk $\Delta$ in $\bC$ with $\Pi |_{t = 0} = \pi$.
By restricting the deformation $\cX \to \Delta$ of $X$ to a sufficiently small open neighborhood $V_i$ of a singular point $p_i \in X$, we get a deformation $\cV_i \to \Delta$ of the singular point $p_i$. Then the condition (\ref{pritran2}) implies that $\Pi^{- 1} (\cV_i ) \to \cV_i$ is isomorphic to the trivial family $\pi^{- 1} (V_i) \times \Delta  \to V_i \times \Delta$ over the unit disk $\Delta$.
\end{remark}

We recall the Namikawa's criterion for the smoothability \cite[(2.5)]{nam}.

\begin{theorem} [\cite{nam}] \label{namsm}
Let $X$ be a Calabi--Yau $3$-fold. The following two conditions are equivalent:
\begin{enumerate}
\item $X$ is smoothable by a flat deformation;

\item for any small projective partial resolution $X' (\neq X)$ of $X$, $\Def(X') \hookrightarrow \Def(X)$ is not a surjection.
\end{enumerate}
\end{theorem}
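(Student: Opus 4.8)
The plan is to reduce the equivalence to a single structural description of the Kuranishi space: the locus of \emph{singular} deformations inside $\Def(X)$ coincides with the finite union $\bigcup_{X' \neq X} \mathrm{Im}\!\left(\Def(X') \hookrightarrow \Def(X)\right)$ taken over all small projective partial resolutions $X' \neq X$. I would first assemble the formal inputs already recorded above: $\Def(X)$ is smooth, hence an irreducible germ, and each pushforward $\Def(X') \to \Def(X)$ is a closed immersion with smooth source, so its image is a closed subgerm and ``not surjective'' is the same as $\mathrm{Im}(\Def(X')) \subsetneq \Def(X)$. I would also note the standard reformulation of (1): since the non-smooth locus of the universal family is proper over the base, the set of $t$ with smooth fibre $X_t$ is open in $\Def(X)$, so $X$ is smoothable if and only if this open set is nonempty, if and only if the singular deformations form a \emph{proper} closed subgerm of the irreducible germ $\Def(X)$.

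Granting the structural description, both implications are short. The inclusion of each $\mathrm{Im}(\Def(X'))$ with $X' \neq X$ in the singular locus is elementary: a small birational morphism onto a smooth threefold is an isomorphism (purity of the exceptional locus), so a fibre admitting a nontrivial small partial resolution is singular. For (1)$\Rightarrow$(2), I argue by contraposition: if some $X' \neq X$ has $\Def(X') \to \Def(X)$ surjective, then $\mathrm{Im}(\Def(X')) = \Def(X)$ lies in the singular locus, so \emph{every} deformation of $X$ is singular and $X$ is not smoothable. For (2)$\Rightarrow$(1), assuming each image is a proper closed subgerm, their finite union cannot exhaust the irreducible germ $\Def(X)$, so a general deformation lies outside all of them; by the structural description its complement consists exactly of the smooth deformations, giving a smoothing.

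Two inputs remain. The finiteness of small projective partial resolutions $X'$ of $X$ — needed so that the union is finite — is the finiteness of small modifications of a fixed terminal Gorenstein $3$-fold and can be cited from the minimal model theory (faces of the relative movable cone, \cite{kawam}). The hard part is the reverse inclusion $\{\text{singular}\} \subseteq \bigcup_{X' \neq X}\mathrm{Im}(\Def(X'))$, i.e.\ that every singular deformation $X_t$ admits a \emph{nontrivial} small projective partial resolution deforming one of the $X'\to X$. This is the local-to-global heart of the matter: at each terminal Gorenstein (cDV) point the versal deformation splits, by Brieskorn--Tyurina, into a smoothing factor and a factor admitting a simultaneous small resolution, and one must glue these local simultaneous resolutions, across the smooth base $\Def(X)$ and using the comparison maps $\Def(X) \to \prod_{p}\Def(p \in X)$, into a single global \emph{projective} small partial resolution $X'$ of $X$ itself. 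Controlling projectivity of the glued model, and ensuring it resolves the central fibre $X$ rather than merely a nearby fibre, is where I expect the main difficulty to lie.
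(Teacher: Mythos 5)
You should first note that the paper contains no proof of this statement at all: it is quoted verbatim from Namikawa \cite[(2.5)]{nam} (see the sentence ``We recall the Namikawa's criterion\dots'' preceding it), so your proposal must stand on its own, and it does not. Your reduction to the structural description of $\Def(X)$ --- that the singular deformations are exactly $\bigcup_{X' \neq X}\mathrm{Im}\bigl(\Def(X') \hookrightarrow \Def(X)\bigr)$ --- is indeed the right skeleton; it is precisely Namikawa's stratification of the Kuranishi space by small projective partial resolutions, which this paper alludes to in the remark following Definition \ref{pritran}. But that description \emph{is} the substance of the theorem, and you defer its hard inclusion (every singular deformation $X_t$ admits a nontrivial small \emph{projective} partial resolution obtained by deforming some $X' \to X$), explicitly writing that this is ``where I expect the main difficulty to lie.'' Without it, your (2) $\Rightarrow$ (1) argument --- a general point of the irreducible germ avoids the finite union of proper closed subgerms, hence has smooth fibre --- has nothing to stand on, since the complement of the union is not yet known to consist of smooth fibres. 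Note also that the local input is more delicate than you suggest: Brieskorn--Tyurina simultaneous resolution of Du Val (hence cDV) singularities exists in general only after a finite base change, whereas $\Def(X') \hookrightarrow \Def(X)$ is a closed immersion on the nose, so one must work with the partial-resolution loci inside $\Def(p \in X)$ rather than with base-changed simultaneous resolutions; globalizing these while keeping projectivity is the core of Namikawa's argument, not a routine gluing.

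There is a second, smaller gap in what you call the elementary inclusion. For $t \in \mathrm{Im}(\Def(X'))$ you obtain a deformed morphism $X'_t \to X_t$, but before purity of the exceptional locus tells you $X_t$ is singular you must know this morphism is still \emph{nontrivial}: a priori a deformation of a small contraction could become an isomorphism on nearby fibres. One has to argue persistence of the contracted curves, e.g.\ by deforming a divisor $E$ on $X'$ with $-E$ relatively ample (possible since $H^1(\sO) = H^2(\sO) = 0$ makes $\mathrm{Pic}$ locally constant in the family) and tracking the classes $[C_i]$ with $E \cdot C_i < 0$. Your remaining ingredients --- smoothness and irreducibility of the germ $\Def(X)$, openness of the smooth-fibre locus, closed-immersion property of $\pi_\ast$, and finiteness of the small projective partial resolutions via \cite{kawam} --- are fine. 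In short: you have correctly reverse-engineered the architecture of Namikawa's proof, but the theorem's actual content, the stratification/local-to-global step, is assumed rather than proven; the honest course here is to do what the paper does and cite \cite[(2.5)]{nam}.
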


As an immediate consequence of Theorem \ref{namsm}, we give an equivalent formulation of the condition (\ref{pritran1}) in Definition \ref{pritran}.

\begin{proposition} \label{main2flop}
Let $\pi : \x \rightarrow X$ be a small projective resolution of a Calabi--Yau 3-fold $X$. Then $X$ satisfies the condition (\ref{pritran1}) in Definition \ref{pritran} if and only if for any small projective partial resolution $X' \rightarrow X$ with $X' \ne X$ the Calabi--Yau 3-fold $X'$ is not smoothable.
\end{proposition}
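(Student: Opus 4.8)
The plan is to derive both implications from Namikawa's smoothability criterion (Theorem \ref{namsm}), applied not to $X$ but to each intermediate partial resolution $X'$. The structural input is the tower of closed immersions $\Def(\x) \hookrightarrow \Def(X') \hookrightarrow \Def(X)$ recorded above, valid for every small projective partial resolution $X'$ of $X$, together with the fact that $\x$ is simultaneously a small resolution of each such $X'$ (up to flops, under which the Kuranishi space is unchanged). I regard the various $\Def(X')$ as smooth closed subgerms of $\Def(X)$, with $\Def(\x)$ the deepest stratum; thus $\Def(\x) \subseteq \Def(X')$ always, and condition (\ref{pritran1}) says exactly that $\Def(\x) = \Def(X')$ for every $X' \ne X$.

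For the forward implication I argue one $X'$ at a time. Fix a small projective partial resolution $X' \ne X$. If $X'$ is smooth it coincides with $\x$ up to flops and there is nothing to prove, so assume $X'$ singular; then $\x \to X'$ is a nontrivial small resolution, so $\x$ is a small partial resolution of $X'$ with $\x \ne X'$. Condition (\ref{pritran1}) makes the inclusion $\Def(\x) \hookrightarrow \Def(X')$ an isomorphism, hence a surjection, and taking $\x$ as the witnessing resolution in the contrapositive of Theorem \ref{namsm} applied to $X'$ shows that $X'$ is not smoothable, as required.

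For the converse I argue by contraposition together with a minimality argument on the relative Picard number. Suppose condition (\ref{pritran1}) fails, so that the set of partial resolutions $Z \ne X$ with $\Def(\x) \subsetneq \Def(Z)$ is nonempty; choose $X'$ in this set with $\rho(\x/X')$ minimal. Such an $X'$ is necessarily singular. For any partial resolution $X''$ of $X'$ with $X'' \ne X'$, additivity of relative Picard numbers in the tower $\x \to X'' \to X'$ gives $\rho(\x/X'') < \rho(\x/X')$, and since $X''$ dominates $X' \ne X$ it lies in the same stratification with $X'' \ne X$; minimality therefore forces $\Def(\x) = \Def(X'')$. As $\Def(\x) \subsetneq \Def(X')$, the inclusion $\Def(X'') \hookrightarrow \Def(X')$ is non-surjective for every such $X''$, so Theorem \ref{namsm} applied to $X'$ yields that $X'$ is smoothable, contradicting the hypothesis that no partial resolution other than $X$ is smoothable.

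I expect the converse to be the delicate step. Namikawa's criterion tests surjectivity against \emph{all} partial resolutions of $X'$, so one cannot conclude smoothability from the single inclusion $\Def(\x) \hookrightarrow \Def(X')$ being non-surjective; it is precisely the minimal choice of $X'$ that upgrades this to non-surjectivity for every intermediate $X''$. The bookkeeping to watch is that each $X''$ really is a partial resolution of $X$ with $X'' \ne X$, so that it participates in the stratification, and that the smooth full resolution serves as the base case of the relative-Picard-number induction, where both sides of the equivalence hold vacuously.
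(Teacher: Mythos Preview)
Your argument is correct and rests on the same two inputs as the paper's proof: Namikawa's smoothability criterion (Theorem~\ref{namsm}) applied to each intermediate $X'$, and the tower $\Def(\x)\hookrightarrow\Def(X')\hookrightarrow\Def(X)$ assembled via $\bQ$-factorialization and flops. The forward direction is identical to the paper's one-line appeal to Theorem~\ref{namsm}.

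For the converse, the paper takes the direct route rather than your contrapositive one. It fixes an arbitrary $X'\ne X$ and iterates Namikawa's criterion \emph{upward}: since $X'$ is not smoothable there exists $X''\ne X'$ above it with $\Def(X'')\xrightarrow{\sim}\Def(X')$; as $X''$ is again a nontrivial partial resolution of $X$ it is itself not smoothable, and one repeats. Termination is controlled by the defect $\sigma(Y)=\rk\bigl(\mathrm{WDiv}(Y)/\mathrm{CDiv}(Y)\bigr)$, which strictly drops at each step and vanishes exactly at a $\bQ$-factorial model $\x''$; one then identifies $\Def(\x)\simeq\Def(\x'')$ via flops. Your minimal-counterexample argument on $\rho(\x/X')$ is the contrapositive reorganization of this same descent: the defect and your relative Picard number are the same termination measure, and the paper's ascending chain is replaced by your single minimal choice. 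Your packaging avoids the explicit iteration; the paper's avoids the bookkeeping that $X''\ne X$ and that $X''$ lies in the stratification.

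One small wrinkle worth noting: the ``tower $\x\to X''\to X'$'' you invoke for additivity of relative Picard numbers need not exist as a morphism; $\x$ is only a resolution of $X''$ after replacing it by a $\bQ$-factorialization of $X''$ via flops. Since flops preserve Picard number, the inequality $\rho(\x/X'')<\rho(\x/X')$ survives, so the argument is unaffected --- but this is precisely why the paper phrases the descent in terms of the defect $\sigma$, which is intrinsic to each $X'$ and sidesteps the issue.
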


\begin{proof}
First, we observe that the "only if" implication follows immediately from Theorem \ref{namsm}. To proof the "if" implication, we recall the defect $\sigma (Y)$ of a variety $Y$. It is the rank of $\mathrm{WDiv} (Y) / \mathrm{CDiv} (Y)$, where $\mathrm{WDiv} (Y)$ (resp. $\mathrm{CDiv} (Y)$) is the Abelian group of Weil (resp. Cartier) divisors of $Y$. Then $\sigma(Y) < \infty$ (resp. = 0) if $Y$ has at most rational singularities \cite[(1.1)]{kawam} (resp. if $Y$ is $\bQ$-factorial). Remark that if $Y$ admits a nontrivial small birational morphism then $\sigma(Y) > 0$.

Fix a small projective partial resolution $X' \rightarrow X$ with $X' \ne X$. By Theorem \ref{namsm} and $X'$ is not smoothable, there is a  small projective partial resolution $X'' (\ne X')$ of $X'$ such that $\Def(X'') \hookrightarrow \Def(X')$ is an isomorphism. Clearly $\sigma(X'') < \sigma(X') < \infty$. Since $X''$ is also a small projective partial resolution of $X$, it is not smoothable. Hence we can repeat this process until we reach a $\bQ$-factorial variety $\x''$ with the isomorphism $\Def(\x'') \hookrightarrow \Def(X'')$. According to that $\x''$ is also a $\bQ$-factorialization of $X$, it follows that the composition
$$
\Def (\x) \simeq \Def (\x'') \hookrightarrow \Def(X'') \hookrightarrow \Def(X')
$$
is an isomorphism, which completes the proof of the "if" implication.
\end{proof}

The following proposition explains why we use primitive small transitions as building blocks of general small transitions.

\begin{proposition} \label{dectopri}
Every small transition of Calabi--Yau $3$-folds can be decomposed into primitive small transitions up to deformations and flops.
\end{proposition}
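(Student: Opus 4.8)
The plan is to argue by induction on the relative Picard number $\rho(\x/X)$ (equivalently the defect $\sigma(X)$), refined by the total Milnor number $\mu(X)=\sum_{p\in\Sing(X)}\mu(p)$; concretely I would induct on the pair $(\rho(\x/X),\mu(X))$ ordered lexicographically with $\rho$ primary. If the given transition $\x\to X\rightsquigarrow\tX$ is already primitive there is nothing to do, so I assume that one of the two conditions of Definition \ref{pritran} fails and produce, allowing deformations of the contraction and flops as the statement permits, a decomposition all of whose constituents have strictly smaller invariant.

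Suppose first that condition (\ref{pritran1}) fails. By Proposition \ref{main2flop} (equivalently, Namikawa's criterion Theorem \ref{namsm}) there is then a small projective partial resolution $X'$, with $\x\neq X'\neq X$, that is smoothable; fix a smoothing $Y'$ of $X'$. Then $\x\to X'\rightsquigarrow Y'$ is a small transition, and since $X'\neq\x$ additivity of relative Picard numbers along $\x\to X'\to X$ gives $\rho(\x/X')<\rho(\x/X)$. Moving to a general point of the stratum $\Def(X')\subseteq\Def(X)$ deforms $X$ to a partial smoothing $Y$, and, since the contracted curve classes of the small morphism $X'\to X$ deform in the family (possibly after a flop), deforms $X'\to X$ to a small resolution $Y'\to Y$; because $\Def(X)$ is smooth and $\tX$ is a generic smooth deformation, $Y$ is again smoothable to $\tX$, so $Y'\to Y\rightsquigarrow\tX$ is a small transition with $\rho(Y'/Y)=\rho(X'/X)<\rho(\x/X)$. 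Up to the deformation $X\rightsquigarrow Y$ the original transition is the composite of these two, and both factors have strictly smaller $\rho$, so the induction hypothesis applies.

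Suppose next that condition (\ref{pritran1}) holds but condition (\ref{pritran2}) fails, so the composition $\Def(\x)\to\prod_{p}\Def(p\in X)$ is nontrivial. Choosing a direction in $\Def(\x)$ along which it is nonconstant, I would deform the small contraction $\x\to X$ to $\x_1\to X_1$; staying inside $\Def(\x)$ preserves the small resolution, while the induced nontrivial local deformation (partially) smooths at least one singular point of $X$, so that $\rho(\x_1/X_1)=\rho(\x/X)$ but $\mu(X_1)<\mu(X)$ by upper semicontinuity of the Milnor number. Smoothness of $\Def(\x)$ and $\Def(X)$ ensures that $\x_1$ is again smooth, that $X_1$ is again a Calabi--Yau $3$-fold with terminal (Gorenstein cDV) singularities, and that $X_1$ remains smoothable to $\tX$. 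Thus, up to deformation, the transition is replaced by $\x_1\to X_1\rightsquigarrow\tX$ of strictly smaller invariant $(\rho,\mu)$, and again the induction hypothesis applies. The induction bottoms out precisely at primitive transitions: when $\rho=1$ condition (\ref{pritran1}) is automatic, and once the singular points have been reduced to ODPs carrying their small resolution the local smoothing direction is obstructed, so condition (\ref{pritran2}) holds as well.

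The structural inputs I would rely on are the smoothness of the Kuranishi spaces $\Def(X)$ and $\Def(\x)$, the closed-immersion property of $\Def(X')\hookrightarrow\Def(X)$ for small partial resolutions, Namikawa's smoothability criterion, additivity of the defect along towers of small morphisms, and the flop-invariance of the Kuranishi space (which, together with the nonuniqueness of small resolutions, is exactly why the statement must allow flops). The main obstacle I anticipate is verifying that the objects produced by deformation are \emph{genuine} small transitions: one must check that the partial smoothing $Y$ (resp.\ $X_1$) is a Calabi--Yau $3$-fold with at worst terminal singularities still smoothable to $\tX$, and that smallness of the contraction survives in the family---in the first case this means showing that $X'\to X$ deforms to $Y'\to Y$ along the smoothing of $X'$ so that the two pieces chain back up to the original transition, and here the smoothness of $\Def(X)$, the closed-immersion property, and the existence of relative small contractions in flat families (possibly after flops) are all essential.
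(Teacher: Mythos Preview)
Your approach mirrors the paper's: induction on $\rho(\x/X)$, splitting into the two failure modes of primitivity, invoking Proposition \ref{main2flop}/Theorem \ref{namsm} when condition (\ref{pritran1}) fails and Milnor-number descent when condition (\ref{pritran2}) fails. The lexicographic refinement by $\mu(X)$ is a clean way to package what the paper phrases as an inner loop (``repeating this process finitely many times leads to a primitive small transition'').

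There is, however, a genuine gap in your treatment of the first case. When condition (\ref{pritran1}) fails and you obtain a smoothable partial resolution $X'$ with $\x\neq X'\neq X$, you write ``$\x\to X'\rightsquigarrow Y'$ is a small transition'' and invoke ``additivity of relative Picard numbers along $\x\to X'\to X$'' as though $\x$ automatically dominates $X'$. It need not: $X'\to X$ is merely \emph{some} small projective partial resolution of $X$, and there is no reason the given morphism $\x\to X$ factors through it. The paper handles this by choosing a $\bQ$-factorialization $\x'$ of $X'$; then $\x$ and $\x'$ are both $\bQ$-factorializations of $X$, hence connected by a sequence of flops, so $\x'$ is smooth and $\Def(\x')\simeq\Def(\x)$. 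One replaces the original transition by $\x'\to X\rightsquigarrow\tX$ and then decomposes via $\x'\to X'\rightsquigarrow Y'$ and $Y'\to Y\rightsquigarrow\tX$. This is precisely where the ``up to flops'' clause is needed---not, as your parenthetical suggests, in deforming $X'\to X$ to $Y'\to Y$. Once this correction is made, your argument goes through essentially as in the paper.
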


\begin{proof}
Let $\x \xrightarrow{\pi} X \rightsquigarrow \widetilde{X}$ be a small transition of Calabi--Yau $3$-folds. We will prove the proposition using induction on the relative Picard number $\rho \coloneqq \rho(\x / X)$.

Observe that the condition (\ref{pritran1}) in Definition \ref{pritran} is automatically satisfied in the case $\rho = 1$. Suppose that the composition map $(\ref{locglo})$ in Definition \ref{pritran} is not trivial. The key point is just that the Du Val surface singularities have no moduli. In fact, given such a nontrivial deformation $\Pi : \chX \rightarrow \cX$ of $\pi$ over the unit disk $\Delta$ in $\bC$ with $\Pi |_{t = 0} = \pi$, there is a nontrivial holomorphic map $\Delta \to \Def(p_i \in X)$. Since $(p_i \in X)$ is an isolated cDV singularity, it is a $1$-parameter family $f_i$ over a $1$-dimensional disk $\Delta_i$ of Du Val surface singularities (cf. \cite{reid2} or \cite[Section 1]{nam2}). Let $S_i \coloneqq f_i^{- 1}(0)$. By the versality of $\Def (S_i)$ there is a nontrivial homomorphic map $\psi_i : \Delta \times \Delta_i \to \Def(S_i)$.

Let $F_i$ be the miniversal family for the deformation of $S_i$. Since the Milnor number of hypersurface singularities is upper semicontinuous under deformations, the Milnor number of the isolated Du Val surface singularity $S_i$ is greater than or equal to the sum of the Milnor numbers at all singularities of the Du Val surface $F_i^{- 1}(\psi_i (t, w))$ for $(t, w) \in \Delta \times \Delta_i$. Recall that the isolated Du Val surface singularity $S_i$ is simple (and therefore of type $A_n$, $D_n$, $E_6$, $E_7$, $E_8$). Hence, for sufficiently small $t \in \Delta \setminus \{0\}$, the Milnor number at a singularity of $F_i^{- 1}(\psi_i (t, 0))$ is less than the Milnor number of the isolated Du Val surface singularity $S_i= F_i^{- 1}(\psi_i (0, 0))$ (cf. \cite[Theorem 5]{gabr} or \cite[Remark 4.42]{komo2}) and then we replace the original small transition $\x \xrightarrow{\pi} X \rightsquigarrow \widetilde{X}$ with $\chX_t \xrightarrow{\Pi |_t} \cX_t \rightsquigarrow \widetilde{X}$.  Repeating this process finitely many times leads to a primitive small transition.

Now assume that $\rho \geqslant 2$ and that there is a small projective partial resolution $X' \to X$ with $X' \neq X$ such that the Calabi--Yau $3$-fold $X'$ is smoothable. Take a $\bQ$-factorialization $\x'$ of $X'$. Since $\x$ and $\x'$ are both $\bQ$-factorialization of $X$ and $\x$ is smooth, they are connected by flops
$$
\SelectTips{cm}{} \xymatrix{\x \ar@{-->}[r]^{\text{flops}}  \ar@/_1.4pc/[rrr]^{\pi} & \x' \ar[r]& X' \ar[r] & X}
$$
and $\x'$ is also smooth. Then we replace $\x \xrightarrow{\pi} X \rightsquigarrow \widetilde{X}$ with the new small transition $\x' \rightarrow X \rightsquigarrow \widetilde{X}$. By assumption, $X' \to X$ can be deformed to a small projective resolution $Y' \to Y$ where $Y'$ is smooth. Thus we have decomposed $\x' \rightarrow X \rightsquigarrow \widetilde{X}$ into two small transitions $\x' \to X' \rightsquigarrow Y'$ and $Y' \to Y \rightsquigarrow \widetilde{X}$.

By induction, we may assume that, for any small projective partial resolution $X' (\neq X)$ of $X$, the Calabi--Yau 3-fold $X'$ is not smoothable. By Proposition \ref{main2flop}, the small transition $\x \xrightarrow{\pi} X \rightsquigarrow \widetilde{X}$ satisfies the condition (\ref{pritran1}) in Definition \ref{pritran}. Using the same argument as in the case $\rho =1$ yields a primitive small transition, and the proof is completed.
\end{proof}

If we want to approach Question \ref{q2}, understanding primitive small transitions becomes essential. The following theorem provides the first step towards this problem:

\begin{theorem} \label{main2}
Let $\pi : \x \rightarrow X$ be a small projective resolution of a Calabi--Yau 3-fold $X$. If the natural closed immersion $\Def(\widehat{X}) \hookrightarrow \Def(X)$ of Kuranishi spaces is an isomorphism then the singularities of $X$ are ODPs. Moreover, the number of ODPs is equal to the relative Picard number $\rho(\x / X)$.
\end{theorem}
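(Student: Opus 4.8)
The plan is to reinterpret the hypothesis as the statement that \emph{no} flat deformation of $X$ changes the analytic type of its singularities, then to play this against the fact that a non-smoothable Calabi--Yau $3$-fold must degenerate to a conifold, and finally to count the nodes by a local-cohomology computation resting on Lemma \ref{diff}.

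First I would reformulate the assumption. Let $r : \Def(X) \to \prod_{p \in \Sing(X)} \Def(p \in X)$ be the product of the natural restriction maps. The image of the closed immersion $\pi_{\ast} : \Def(\x) \hookrightarrow \Def(X)$ is exactly the locus of deformations of $X$ admitting a simultaneous small resolution, and this locus is known to be the germ $r^{-1}(0)$ of deformations that are locally trivial along $\Sing(X)$ (cf. \cite[(1.12)]{wah}, \cite[(2.3)]{nam}). Hence $\Def(\x) \hookrightarrow \Def(X)$ being an isomorphism is equivalent to $r^{-1}(0) = \Def(X)$, i.e.\ to $r$ being the constant map; in particular every flat deformation of $X$ is locally trivial at each singular point.

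Next I would deduce that the singularities are ODPs. Since $\pi : \x \to X$ is a small resolution with $\x \neq X$ and $\pi_{\ast}$ is surjective, Theorem \ref{namsm} shows that $X$ is \emph{not} smoothable. By \cite{gros, nast} a Calabi--Yau $3$-fold deforms either to a smooth manifold or to a conifold; as $X$ is not smoothable, there is a point $t_0 \in \Def(X)$ whose fibre $X_0$ is a conifold. Because $r$ is trivial, the corresponding family is locally trivial near each $p \in \Sing(X)$, so $X_0$ carries, at the point specialising from $p$, a singularity analytically isomorphic to $(p \in X)$. As $X_0$ has only ODPs, each $(p \in X)$ is an ODP, which proves the first assertion.

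Finally I would identify $n = |\Sing(X)|$ with $\rho(\x/X)$. As $X$ has only ODPs, the exceptional set $C = \bigcup_{i=1}^{n} C_i$ of $\pi$ is a disjoint union of $(-1,-1)$-curves, so Lemma \ref{diff} gives an isomorphism $\bigoplus_i H^2_{C_i}(\x, \Omega^2_\x) \xrightarrow{\sim} H^2_C(\x, \Omega^2_\x)$; each summand is one-dimensional by a local computation, whence $H^2_C(\x, \Omega^2_\x) \cong \bC^n$. Using $K_\x \sim 0$ to identify $T_\x \cong \Omega^2_\x$, the local-cohomology sequence of $(C,\x)$ reads
\begin{equation*}
H^1(\x, \Omega^2_\x) \to H^1(\x \setminus C, \Omega^2_\x) \to H^2_C(\x, \Omega^2_\x) \xrightarrow{\psi} H^2(\x, \Omega^2_\x),
\end{equation*}
where $\psi$ sends the generator of $H^2_{C_i}$ to the cohomology class of $C_i$, so that $\mathrm{rank}\,\psi = \dim_{\bC} \langle [C_1], \dots, [C_n] \rangle = \rho(\x/X)$. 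On the other hand $\x \setminus C \cong X \setminus \Sing(X)$, and the cokernel of $H^1(\x,\Omega^2_\x) \to H^1(\x \setminus C, \Omega^2_\x)$ — equivalently $\ker \psi$ — is precisely the space of first-order smoothings of the nodes, i.e.\ the image of the differential $\alpha$ of $r$ at the origin. Since $r$ is trivial, $\alpha = 0$, so $\psi$ is injective and $n = \mathrm{rank}\,\psi = \rho(\x/X)$.

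The main obstacle is this last identification: that $\ker \psi$ — the homological relations among the exceptional curves detected by the local cohomology of $\Omega^2_\x$ — coincides with the first-order smoothing directions killed by the hypothesis. Making this precise requires comparing the local cohomology of $\x$ along $C$ with the deformation theory of the cDV points of $X$, which is exactly where the smallness of $\pi$ and the triviality of $r$ enter. Alternatively, one can organise the whole statement as an induction on $\rho(\x/X)$ with base case the relative Picard number one situation of \cite[(5.1)]{gros}: peel off a $K_\x$-trivial extremal ray over $X$ to factor $\pi$ through an intermediate Calabi--Yau $3$-fold $X'$, observe that the chain $\Def(\x) \hookrightarrow \Def(X') \hookrightarrow \Def(X)$ of closed immersions of smooth germs forces both maps to be isomorphisms, and apply the inductive hypothesis to $\x \to X'$; there the delicate point is controlling the singularity produced by the remaining extremal contraction.
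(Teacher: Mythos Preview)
Your main gap is in the very first reformulation. The identity ``image of $\pi_{\ast} = r^{-1}(0)$'' is not what \cite[(1.12)]{wah} or \cite[(2.3)]{nam} assert (those references only give that $\pi_{\ast}$ is a closed immersion), and in fact it is \emph{false} for general cDV points. The containment $\mathrm{im}(\pi_{\ast}) \subseteq r^{-1}(0)$, i.e.\ the triviality of the composite $\Def(\x)\hookrightarrow\Def(X)\to\prod_p\Def(p\in X)$, is precisely condition~(\ref{pritran2}) in Definition~\ref{pritran}; the paper isolates it as a separate hypothesis because it does \emph{not} follow from $\pi_{\ast}$ being an isomorphism. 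Concretely, for a $cA_n$ point with $n\geqslant 2$ there are local deformations which break the singularity into several ODPs while the small resolution persists (the exceptional chain separates); the discussion of Namikawa's example just before Question~\ref{q2} records exactly this: a general local deformation of the $cA_2$-type point \emph{which preserves the small resolution} already has three ODPs, hence is not locally trivial. So from $\Def(\x)=\Def(X)$ you cannot conclude that $r$ is constant, and your step~2 --- transporting the conifold structure of the Namikawa--Steenbrink limit $X_0$ back to $X$ via local triviality --- does not go through.

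The paper argues instead by the induction on $\rho(\x/X)$ that you sketch only as an alternative. The base case $\rho=1$ is \cite[(5.1)]{gros}. For the inductive step the paper runs your local-cohomology sequence \emph{before} knowing the singularities are ODPs: the hypothesis forces $H^1(\x,\Omega^2_{\x})\to H^1(U,\Omega^2_U)$ to be an isomorphism, so the map $\alpha:\bigoplus_p H^2_{\pi^{-1}(p)}(\Omega^2_{\x})\to H^2(\Omega^2_{\x})$ is injective, and since $\mathrm{rank}(\alpha)=\rho$ while by Lemma~\ref{diff} the source has dimension at least the number of irreducible components of $\Exc(\pi)$, that number is exactly $\rho$. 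Then $\overline{NE}(\x/X)$ is simplicial on $\rho$ extremal rays $[C_i]$, and the ``delicate point'' you flag --- controlling the remaining contraction --- is handled by contracting any two-dimensional face: the induction hypothesis applied to the resulting $\x\to X'$ with $\rho(\x/X')=2$ forces $C_1,C_2$ to be disjoint $(-1,-1)$-curves over two ODPs, which both proves disjointness of the $C_i$ and identifies each singularity of $X$ as an ODP. Your step~3 is essentially this computation, but you invoke it only \emph{after} assuming ODPs; the paper uses it first, to set up the induction. Incidentally, your justification ``$\ker\psi=\mathrm{im}(dr)$'' in step~3 is again the unproven identity of step~1; the correct argument is simply that $\ker\psi=\mathrm{coker}\bigl(T_0\Def(\x)\to T_0\Def(X)\bigr)=0$ directly from the hypothesis.
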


We note that Theorem \ref{main2} is a generalization of \cite[(5.1)]{gros}.

\begin{proof}
The proof is by induction on the relative Picard number $\rho \coloneqq \rho(\x / X)$. Observe that $X$ is not smoothable by Theorem \ref{namsm}. For the case $\rho = 1$, the result follows from the above observation and \cite[(5.1)]{gros}.

To prove the case $\rho \geqslant 2$, we recall some facts about extremal rays. Let $D$ be the pullback of an ample divisor under the morphism $\pi$. By Kodaira's Lemma, a linear system $|mD - A|$ is nonempty for any ample divisor $A$ on $\x$ and $m \gg 0$. Pick a divisor $E \in |mD - A|$, which is relatively antiample by the relative Kleiman's criterion for ampleness. Let $\overline{NE}(\x / X)$ be the relative Mori cone. It is a convex (polyhedral) cone generated by (finitely many) exceptional curves of $\pi$. Using the Cone Theorem \cite[(3.25)]{komo2}, we have a klt pair $(\x, \varepsilon E)$ for $0 < \varepsilon \ll 1$ with $\mathscr{O}_{\x}(- E)$ being $\pi$-ample such that
$$
\overline{NE}(\x / X) = \sum\nolimits_{i = 1}^{k} \mathbb{R}_{\geqslant 0} [C_i],
$$
where $\mathbb{R}_{\geqslant 0} [C_i]$ are different extremal rays and $k \geqslant \rho$. Notice that every face of $\overline{NE}(\x / X)$ is a $(K_{\x} + \varepsilon E)$-negative extremal face. It is also evident that the number of irreducible components of $\Exc(\pi)$ is at least $\rho$.

Suppose that our assertion is valid for small resolutions with the relative Picard number less than $\rho$, and let $\pi: \x \to X$ be a small projective resolution with $\rho(\x/X) = \rho$. We first claim that the number of irreducible components of $\Exc(\pi)$ is the relative Picard number $\rho$.

Let $U = \x \setminus \pi^{- 1}(\sing(X))$. Consider the following long exact sequence
\begin{equation}\label{lexa}
0 \rightarrow H^1(\Omega^2_{\x}) \rightarrow H^1(U, \Omega^2_{U}) \rightarrow \bigoplus_{p \in \sing(X)} H^2_{\pi^{- 1}(p)}(\Omega^2_{\x}) \xrightarrow{\alpha} H^2(\Omega^2_{\x})
\end{equation}
where $H^1_{\pi^{- 1}(p)}(\Omega^2_{\x})$ is vanishing for all $p \in \sing(X)$ by the depth argument (cf. Lemma \ref{diff}). Since $X$ is Calabi-Yau, $\Def(X)$ is smooth \cite{nam2} and the tangent space of $\Def(X)$ is isomorphic to $H^1(U, \Omega^2_U)$, by Schlessinger's result \cite{fri, sch}. According to the assumption of the theorem, the dimension of $\Def(\x)$ and $\Def(X)$ are the same. Then we get $h^1(\Omega^2_{\x}) = h^1(U, \Omega^2_{U})$ and thus $\alpha$ is injective. Since the image of $\alpha$ is just the vector space generated by the fundamental classes of irreducible components of $\pi^{- 1}(\sing(X))$, we get $\mathrm{rank}(\alpha) = \rho$. According to Lemma \ref{diff}, it follows that the dimension of $\bigoplus_{p} H^2_{\pi^{- 1}(p)}(\Omega^2_{\x})$ is greater than or equal to the number of irreducible components of $\pi^{- 1}(\sing(X))$ which is at least $\rho$. Hence we conclude that the number of irreducible components of $\pi^{- 1}(\sing(X))$ is exactly $\rho$.

Notice that now we have
$$
\overline{NE}(\x / X) = \bigoplus\nolimits_{i = 1}^{\rho} \mathbb{R}_{\geqslant 0} [C_i].
$$
If any two curves have non-empty intersection, say $C_1$ and $C_2$, we let $F$ be the cone generated by $[C_1]$ and $[C_2]$. It is indeed a face since there are precisely $\rho$ generators of the $\rho$-dimensional cone $\overline{NE}(\x / X)$. Let $\pi' : \x \rightarrow X'$ be the contraction of the $(K_{\x} + \varepsilon E)$-negative extremal face $F$. By the induction hypothesis, the singularities of $X'$ consist of exactly two ODPs and $\Exc(\pi') = C_1 \coprod C_2$. This contradicts to that $C_1 \cap C_2 \neq \varnothing$, and thus $\Exc(\pi)$ is a disjoint union of irreducible rational curves. By the above argument using the induction hypothesis and the Cone theorem, we infer that the singularities of $X$ are ODPs (the normal bundle of an irreducible exceptional curve in $\x$ is $\sO_{\bP^1}(- 1)^{\oplus 2}$).
\end{proof}

We can use Theorem \ref{main2} to give a necessary condition for primitive small transitions with the relative Picard number $\geqslant 2$.

\begin{corollary}
Let $\widehat{X} \xrightarrow{\pi} X \rightsquigarrow \widetilde{X}$ be a primitive small transition. If the relative Picard number of $\pi$ is great than or equal to two, then for any nontrivial factorization $\widehat{X} \rightarrow X' \rightarrow X$ with $X' \neq X$ the singularities of $X'$ are ODPs. Moreover, the number of ODPs of $X'$ equals $\rho(\x) - \rho(X')$.
\end{corollary}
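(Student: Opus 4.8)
The plan is to reduce the statement to a direct application of Theorem \ref{main2} to the intermediate contraction $\x \to X'$, together with a standard comparison of Picard numbers. First I would record that any nontrivial factorization $\x \to X' \to X$ with $X' \neq X$ presents $\x \to X'$ as a small projective resolution of $X'$: indeed $\x$ is smooth, the morphism $\x \to X'$ is birational and, being a morphism over $X$ with $\x$ projective over $X$, is itself projective; its exceptional locus is contained in $\Exc(\x \to X)$ and hence has codimension at least two. Moreover $X'$ is itself a Calabi--Yau $3$-fold with terminal Gorenstein singularities, being a small projective partial resolution of the Calabi--Yau $3$-fold $X$.

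Next I would invoke the primitivity of the transition. Since $X'$ is a small projective partial resolution of $X$ with $X' \neq X$, condition (\ref{pritran1}) of Definition \ref{pritran} gives precisely that the closed immersion $\Def(\x) \hookrightarrow \Def(X')$ of Kuranishi spaces is an isomorphism. This is exactly the hypothesis of Theorem \ref{main2} applied to the small projective resolution $\x \to X'$. That theorem then yields at once that the singularities of $X'$ are ordinary double points and that their number equals the relative Picard number $\rho(\x / X')$.

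It remains to identify $\rho(\x / X')$ with $\rho(\x) - \rho(X')$. Writing $f \colon \x \to X'$ for the contraction, the pullback $f^{\ast} \colon N^1(X')_{\bQ} \to N^1(\x)_{\bQ}$ on Cartier divisor classes modulo numerical equivalence is injective (if $f^{\ast} D \equiv 0$ then pushing forward gives $D \equiv 0$, as $f$ is birational), while the kernel of the natural surjection $N^1(\x)_{\bQ} \to N^1(\x / X')_{\bQ}$ consists of the classes numerically trivial on every $f$-contracted curve, which numerically descend to pullbacks from $X'$. Hence the sequence
$$
0 \to N^1(X')_{\bQ} \xrightarrow{f^{\ast}} N^1(\x)_{\bQ} \to N^1(\x / X')_{\bQ} \to 0
$$
is exact and gives $\rho(\x / X') = \rho(\x) - \rho(X')$, so the asserted count of ODPs follows. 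The one point requiring care is this last comparison: one must ensure it is valid without assuming $X'$ to be $\bQ$-factorial — which it is not, since it carries the nontrivial small contraction $f$ — and this is exactly where the right-exactness, i.e. the contraction-theoretic fact that fiberwise numerically trivial divisors descend, is needed. Everything else is an immediate transcription of Theorem \ref{main2} under the primitivity hypothesis (\ref{pritran1}).
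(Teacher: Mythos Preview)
Your proof is correct and follows exactly the approach intended by the paper, which states the corollary without proof as an immediate consequence of Theorem \ref{main2}: apply condition (\ref{pritran1}) of Definition \ref{pritran} to obtain $\Def(\x)\simeq\Def(X')$, then invoke Theorem \ref{main2} for the small projective resolution $\x\to X'$. Your additional care in justifying the identity $\rho(\x/X')=\rho(\x)-\rho(X')$ via the exact sequence on $N^1$ (and noting that the descent of $f$-numerically trivial divisors is available here because $f$ arises as the contraction of a face of $\overline{NE}(\x/X)$, as set up in the proof of Theorem \ref{main2}) is a welcome clarification that the paper leaves implicit.
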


\begin{question} \label{q3}
Can one classifies primitive small transitions? Or more ambitiously, is it true a primitive small transition is necessarily a conifold transition?
\end{question}

It amounts to studying the \emph{global} deformation theory of the small contraction $\widehat{X} \to X$. Notice that in the case of standard web (CICY inside product of projective spaces, Theorem \ref{webcicy}), we have used a Bertini-type theorem for degeneracy loci to play the role of the required deformation theory. For a general small transition, a deeper analysis of globalizing the local deformations is needed.

\end{document}